\def \cost {M}
\def \Kol {\mathcal{K}_{\cost,B}}
\newtheorem{theorem}{Theorem}[section]
\newtheorem{corollary}[theorem]{Corollary}
\newtheorem{lemma}[theorem]{Lemma}
\newtheorem{definition}[theorem]{Definition}
\newtheorem{remark}[theorem]{Remark}
\newtheorem{assumption}[theorem]{Assumption}
\newtheorem{notation}[theorem]{Notation}
\def \k {{\kappa}}
\def \a {{\alpha}}
\def \b {{\beta}}
\def \d {{\delta}}
\def \l {{\lambda}}
\def \G {{\Gamma}}
\def \s {{\sigma}}
\def \R {{\mathbb {R}}}
\def \x {{\xi}}
\def \e {{\varepsilon}}
\def \r {{\varrho}}
\def \t {{\tau}}
\def \t {{\tau}}
\def \n {{\nu}}
\def \m {{\mu}}
\def \y {{\eta}}
\def \z {{\zeta}}
\def \LL {{L_{0}}}
\def \Gg {{\G_{0}}}
\def \g {{\gamma}}
\def \O {{\Omega}}
\def \phi {{\varphi}}
\def \div {{\text{\rm div}}}
\def \tilde {\widetilde}
\def\p{\partial}
\def \k {{\kappa}}
\def \a {{\alpha}}
\def \b {{\beta}}
\def \d {{\delta}}
\def \l {{\lambda}}
\def \G {{\Gamma}}
\def \s {{\sigma}}
\def \Dil {\mathcal{D}}
\def \R  {{\mathbb {R}}}
\def \x {{\xi}}
\def \g {{\gamma}}
\def \e {{\varepsilon}}
\def \t {{\tau}}
\def \n {{\nu}}
\def \m {{\mu}}
\def \y {{\eta}}
\def \z {{\zeta}}
\def \p {{\partial}}
\def \a {{\alpha}}
\def \O {{\Omega}}
\def \d {{\delta}}
\def \k {{\kappa}}
\def \a {{\alpha}}
\def \b {{\beta}}
\def \d {{\delta}}
\def \G {\Ga}
\def \Ga {{\Gamma}}
\def \s {{\sigma}}
\def \R {{\mathbb {R}}}
\def \x {{\xi}}
\def \e {{\varepsilon}}
\def \r {{\varrho}}
\def \t {{\tau}}
\def \t {{\tau}}
\def \n {{\nu}}
\def \m {{\mu}}
\def \y {{\eta}}
\def \z {{\zeta}}
\def \g {{\gamma}}
\def \O {{\Omega}}
\def \phi {{\varphi}}
\def \div {{\text{\rm div}}}
\def \tilde {\widetilde}
\def\l {\lambda}
\def \à {{\`a }}
\def \è {{\`e }}
\def \ò {{\`o }}
\def \ù {{\`u }}
\def\section{\@startsection {section}{1}{\z@}{3.25ex plus 1ex minus
 .2ex}{1.5ex plus .2ex}{\large\bf}}
\def\subsection{\@startsection{subsection}{2}{\z@}{3.25ex plus 1ex minus
 .2ex}{1.5ex plus .2ex}{\normalsize\bf}}
\title{\empty}
\author{\empty}
\date{\empty}
\numberwithin{equation}{section}
\begin{document}

\title{Nash estimates and upper bounds for non-homogeneous Kolmogorov equations}

\author{Alberto Lanconelli\thanks{Dipartimento di Matematica, Universit\`a di Bari Aldo Moro, Bari, Italy. \textbf{e-mail}: alberto.lanconelli@uniba.it} \and Andrea Pascucci\thanks{Dipartimento di Matematica,
Universit\`a di Bologna, Bologna, Italy. \textbf{e-mail}: andrea.pascucci@unibo.it}}

\date{This version: \today}

\maketitle

\begin{abstract}
We prove a Gaussian upper bound for the fundamental solutions of a class of ultra-parabolic
equations in divergence form. The bound is independent on the smoothness of the coefficients and
generalizes some classical results by Nash, Aronson and Davies. The class considered has relevant
applications in the theory of stochastic processes, in physics and in mathematical finance.
\end{abstract}

\noindent \textbf{Keywords}: Nash estimates, Kolmogorov equations, ultra-parabolic equations,
fundamental solution, linear stochastic equations

%
%

\section{Introduction}
We consider the Kolmogorov-type equation with measurable coefficients
\begin{equation}\label{PDE}
 Lu:=\sum_{i,j=1}^{m_0}\partial_{x_i}(a_{ij}\partial_{x_j}u)+\sum_{i=1}^{m_0}\partial_{x_i}(a_{i} u)+
 c u+\sum_{i,j=1}^{d}b_{ij}x_j\partial_{x_i}u+\partial_tu=0,\qquad (t,x)\in\R\times\R^{d}.
\end{equation}
where $m_0\leq d$ and $L$ verifies the following two standing assumptions:
\begin{assumption}\label{assA}
The coefficients $a_{ij}=a_{ji},a_i,c$, for $1\le i,j\le m_0$, are bounded, measurable functions
such that
\begin{equation}\label{ellipticity}
 \m^{-1}|\x|^{2}\le \sum_{i,j=1}^{m_{0}}a_{ij}(t,x)\x_{i}\x_{j}\le \m|\x|^{2},\qquad
 \x\in\R^{m_{0}},\ (t,x)\in\R^{d+1},
\end{equation}
for some positive constant $\m$.
\end{assumption}
\begin{assumption}\label{assB}
The matrix $B:=\left(b_{ij}\right)_{1\leq i,j\leq d}$ has constant real entries and takes the
block-form
\begin{equation}\label{e65b}
  B=\begin{pmatrix}
 \ast & \ast & \cdots & \ast & \ast \\ B_1 & \ast &\cdots& \ast & \ast \\ 0 & B_2 &\cdots& \ast& \ast \\ \vdots & \vdots
 &\ddots& \vdots&\vdots \\ 0 & 0 &\cdots& B_{\nu}& \ast
  \end{pmatrix}
\end{equation}
where each $B_i$ is a $\left(m_{i}\times m_{i-1}\right)$-matrix of rank $m_{i}$ with
\begin{equation}
 m_0\geq m_1\geq \cdots \geq m_{\nu}\geq 1, \qquad \sum_{i=0}^{\nu} m_i = d,
\end{equation}
and the blocks denoted by {\rm ``$\ast$''} are arbitrary.
\end{assumption}

Degenerate equations of the form \eqref{PDE} naturally arise in the theory of stochastic
processes, in physics and in mathematical finance. For instance, if $W$ denotes a real Brownian
motion, then the simplest non-trivial Kolmogorov operator
  $$\frac{1}{2}\p_{vv}+v\p_{x}+\p_{t},\qquad t\ge 0,\,(v,x)\in\R^{2},$$
is the infinitesimal generator of the classical Langevin's 
stochastic equation
  $$
  \begin{cases}
    dV_{t}=dW_{t}, \\
    dX_{t}=V_{t}dt,
  \end{cases}
  $$
that describes the position $X$ and velocity $V$ of a particle in the phase space (cf.
\cite{Langevin}). Notice that in this case we have $1=m_{0}<d=2$.

Linear Fokker-Planck equations (cf. \cite{Desvillettes} and \cite{Risken}), non-linear
Boltzmann-Landau equations (cf. \cite{Lions1} and \cite{Cercignani}) and non-linear equations for
Lagrangian stochastic models commonly used in the simulation of turbulent flows (cf. \cite{Talay})
can be written in the form
\begin{equation}\label{PDE1}
 \sum_{i,j=1}^{n}\partial_{v_i}(a_{ij}\partial_{v_j}f)+\sum_{j=1}^{n}v_{j}\p_{x_{j}}f+\p_{t}f=0,\qquad
 t\ge 0,\, v\in\R^{n},\, x\in\R^{n},
\end{equation}
with the coefficients $a_{ij}=a_{ij}(t,v,x,f)$ that may depend on the solution $f$ through some
integral expressions. Clearly \eqref{PDE1} is a particular case of \eqref{PDE} with $n=m_{0}<d=2n$
and
  $$B=\begin{pmatrix}
    0 & 0 \\
    I_{n} & 0 \
  \end{pmatrix}$$
where $I_{n}$ denotes the $\left(n\times n\right)$-identity matrix.

In mathematical finance, equations of the form \eqref{PDE} appear in various models for the
pricing of path-dependent derivatives such as Asian options (cf., for instance,
\cite{pascuccibook}, \cite{BarucciPolidoroVespri}), stochastic volatility models (cf.
\cite{HobsonRogers}, \cite{Peszek}) and in the theory of stochastic utility (cf.
\cite{AntonelliBarucciMancino}, \cite{AntonelliPascucci}). In interest rate modeling, equations of
the type \eqref{PDE} were used in the study of the possible realization of Heath-Jarrow-Morton
models in terms of a finite dimensional Markov diffusion (cf. \cite{Ritchken},
\cite{ChiarellaKwon}).

A systematic study of Kolmogorov operators has been carried out by several authors. In the case of
constant coefficients, Kupcov \cite{Kupcov2}, Lanconelli and Polidoro \cite{LanconelliPolidoro}
studied the geometrical properties of the operator, giving necessary and sufficient conditions for
the existence of the fundamental solution. In the case of {\it H\"older continuous coefficients}
and assuming invariance properties with respect to a suitable homogeneous Lie group, existence of
a fundamental solution has been proved by Weber \cite{Weber}, Il'in \cite{Il'in}, Eidelman
\cite{Eidelman} and Polidoro \cite{Polidoro2}; pointwise upper and lower bounds for the
fundamental solution, mean value formulas and Harnack inequalities are given in \cite{Polidoro2}
and \cite{Polidoro1}. Schauder type estimates have been proved by Satyro \cite{Satyro}, Lunardi
\cite{Lunardi}, Manfredini \cite{Manfredini}. In the more general case of non-homogeneous
Kolmogorov equations with H\"older continuous coefficients, the existence of a fundamental
solution has been proved by Morbidelli \cite{Morbidelli} and Di Francesco and Pascucci
\cite{DiFrancescoPascucci2}; Harnack inequalities and Schauder estimates were proved by Di
Francesco and Polidoro \cite{DiFrancescoPolidoro}. The first results for Kolmogorov operators with
{\it measurable coefficients} were proved by Cinti, Pascucci and Polidoro in \cite{PP2003},
\cite{CPP2008}.

\medskip
The main result of this paper is a Gaussian upper bound, independent of the smoothness of the
coefficients, for the transition density/fundamental solution $\G=\G(t,x;T,y)$ of \eqref{PDE}.
Before stating our result, we introduce the following
\begin{notation}\label{notation1}
Let $\cost>0$ and $B:=\left(b_{ij}\right)_{1\leq i,j\leq d}$ a matrix that satisfies Assumption
\ref{assB}. We denote by $\Kol$ the class of Kolmogorov operators $L$ of the form \eqref{PDE},
that satisfy Assumption \ref{assA} with the non-degeneracy constant $\m$ in \eqref{ellipticity}
and the norms
$\|a_{i}\|_{\infty}$, $\|c\|_{\infty}$ smaller than $\cost$. 
\end{notation}
 The following theorem generalizes the classical results by Nash
\cite{Nash},\cite{Fabes1993}, Aronson \cite{Aronson} and Davies \cite{Davies} for uniformly
parabolic equations and provides step forward for the study of non-linear Kolmogorov equations.
\begin{theorem}\label{t1}
Let $L\in \Kol$ and $T_{0}>0$. There exists a positive constant $C$, only dependent on $\cost,B$ and $T_{0}$, 
such that
\begin{align}\label{thes}
 \G(t,x;T,y)\le \frac{C}{(T-t)^{\frac{Q}{2}}}\exp\left(-\frac{1}{C}\left|\Dil\left(\left(T-t\right)^{-\frac{1}{2}}\right)
 \left(x-e^{-(T-t)B}y\right)\right|^2\right),
\end{align}
for $0<T-t\le T_{0}$ and $x,y\in\R^{d}$, with
\begin{equation}\label{e15}
 {\Dil(r) :=\text{\rm diag}(r I_{m_{0}},r^{3} I_{m_{1}},\dots,r^{2{\nu}+1}I_{m_{{\nu}}}),\qquad r>0,}
\end{equation}
where $I_{m_{i}}$ denotes the $\left(m_{i}\times m_{i}\right)$-identity matrix, and
\begin{equation}\label{e77}
 Q:=m_{0}+3m_{1}+\cdots+(2\nu+1)m_{\nu}.
\end{equation}
\end{theorem}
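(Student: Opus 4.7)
The plan is to follow the classical Nash–Aronson–Davies strategy, adapted to the degenerate Kolmogorov setting. In the uniformly parabolic case one first establishes a Nash (or Nash–Moser) inequality yielding ultracontractivity, i.e.\ an on-diagonal bound of the right order, and then upgrades it to a pointwise Gaussian off-diagonal bound via Davies' exponential-perturbation trick. The same two-step scheme is the natural route here, the difficulty being that the diffusion acts only on the first $m_{0}$ coordinates, so everything must be calibrated to the anisotropic dilations $\Dil(r)$ and the homogeneous dimension $Q$ dictated by Assumption~\ref{assB}.

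\textbf{Preparatory reductions.} First I would dispose of the lower-order terms $a_{i},c$: since their sup-norms are bounded by $\cost$, standard Duhamel/exponential-multiplier manipulations absorb them into an $e^{C(T-t)}$ prefactor, harmless for $T-t\le T_{0}$, reducing matters to the principal part. Second, I would remove the transport $Bx\cdot\nabla$ by the change of variable $\tilde u(t,x):=u(t,e^{tB}x)$, which conjugates the Kolmogorov drift away at the price of a time-dependent (still uniformly elliptic on the first $m_{0}$ coordinates) diffusion matrix $\tilde a(t,\cdot)$. The natural rescaling compatible with $B$ and the ellipticity is exactly $\Dil(r)$, and the shift $e^{-(T-t)B}y$ in \eqref{thes} is already built into this conjugation.

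\textbf{Ultracontractive (on-diagonal) bound.} The target $\G\le C(T-t)^{-Q/2}$ on the diagonal amounts to $\|e^{tL}\|_{L^{1}\to L^{\infty}}\le C t^{-Q/2}$. I would derive it from an adapted Nash-type inequality of the form
\begin{equation}
 \|u\|_{L^{2}}^{2+4/Q}\le C\,\Bigl(\sum_{i,j=1}^{m_{0}}\int a_{ij}\partial_{x_{i}}u\,\partial_{x_{j}}u\Bigr)\|u\|_{L^{1}}^{4/Q},
\end{equation}
combined with the standard Nash differential inequality for $\|u(t,\cdot)\|_{L^{2}}^{2}$. The subtle point is that only $m_{0}$ derivatives appear on the right-hand side; the remaining $d-m_{0}$ directions are recovered by exploiting the iterated commutators $[Bx\cdot\nabla+\partial_{t},\partial_{x_{i}}]$, which under Assumption~\ref{assB} span the whole tangent space with exactly the weights appearing in $Q=m_{0}+3m_{1}+\cdots+(2\nu+1)m_{\nu}$.

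\textbf{Gaussian off-diagonal bound via Davies' method.} With the on-diagonal bound in hand, for a Lipschitz weight $\psi$ with $\sum_{i\le m_{0}}(\partial_{x_{i}}\psi)^{2}\le 1$ I would study the conjugated semigroup $T_{t}^{\alpha}:=e^{\alpha\psi}e^{tL}e^{-\alpha\psi}$. The conjugation produces additional first-order terms of size $O(\alpha)$ in the $m_{0}$ elliptic directions, which after completing the square yield an $\exp(C\alpha^{2}t)$ factor in the $L^{2}\to L^{2}$ (and by interpolation $L^{1}\to L^{\infty}$) norm. Choosing $\psi$ to be the anisotropic control pseudo-distance associated with $\Dil$ and $B$ (so that $\psi(x)\simeq |\Dil((T-t)^{-1/2})(x-e^{-(T-t)B}y)|$) and optimizing over $\alpha>0$ produces the exponential factor in \eqref{thes} with the drift-corrected center $e^{-(T-t)B}y$.

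\textbf{Main obstacle.} The genuinely hard step is the Nash-type functional inequality in the degenerate case: one needs a \emph{quantitative} way of passing regularity from the $m_{0}$ elliptic directions to the remaining $d-m_{0}$ via the commutators with the Kolmogorov drift, uniformly over all measurable coefficients in the class $\Kol$. Once this is available, the Davies perturbation argument is relatively standard, but reproducing the precise anisotropic scaling $\Dil((T-t)^{-1/2})$ and the shift $e^{-(T-t)B}y$ in the exponent requires a careful choice of test weights $\psi$ matching the intrinsic geometry of the Kolmogorov operator rather than the Euclidean one.
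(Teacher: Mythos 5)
There is a genuine gap at the heart of your plan: the adapted Nash inequality you propose, namely $\|u\|_{L^{2}}^{2+4/Q}\le C\bigl(\sum_{i,j\le m_{0}}\int a_{ij}\,\partial_{x_i}u\,\partial_{x_j}u\bigr)\|u\|_{L^{1}}^{4/Q}$, is false as a functional inequality for arbitrary $u$. If $u$ depends only on the degenerate variables $x_{m_0+1},\dots,x_d$ (or, after truncation, is any nontrivial function with $\partial_{x_1}u=\dots=\partial_{x_{m_0}}u=0$), the right-hand side vanishes while the left-hand side does not. The commutator/H\"ormander mechanism you invoke to ``recover'' the missing $d-m_0$ directions is a smoothing property of \emph{solutions} of the evolution equation, not of generic functions, so it cannot repair a static inequality that must hold on the whole form domain; and since the operator has merely measurable coefficients, you cannot trade it for a parametrix-type gain on the coefficients either. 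You have correctly identified this as the main obstacle, but the proposal offers no mechanism to overcome it, and the classical Nash iteration for $\|u(t,\cdot)\|_{L^2}$ therefore does not get off the ground. This is exactly the point where the paper departs from the classical scheme: the Sobolev-type inequality (Theorem \ref{Sobolev}) is proved \emph{only for weak sub/super-solutions}, by representing them through the fundamental solution $\Gg$ of the constant-coefficient principal part and using potential estimates, and the on-diagonal bound then comes from Moser iteration (Theorems \ref{Caccioppoli}, \ref{moser}, \ref{t4}) rather than from ultracontractivity via a Nash inequality.

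Two further points would need substantial work even granting an on-diagonal bound. First, in the Davies step the conjugation $e^{\alpha\psi}Le^{-\alpha\psi}$ also produces the drift term $\alpha\,Y\psi=\alpha\langle Bx,D\psi\rangle+\alpha\partial_t\psi$, which is unbounded in $x$; controlling it forces a very specific choice of weight and time window (this is what the explicit $h(t,x)=-|x|^2/(2(\eta-s)-k(\eta-t))+\alpha(\eta-t)$ and the constants $k,\alpha$ accomplish in the paper's Theorem \ref{t2}), and your generic ``Lipschitz $\psi$ in the first $m_0$ directions'' does not address it. Second, the anisotropic exponent $\bigl|\Dil\bigl((T-t)^{-1/2}\bigr)(x-e^{-(T-t)B}y)\bigr|^2$ is not obtained in the paper by choosing $\psi$ equal to an intrinsic control distance: the weighted energy estimate is run with an isotropic Gaussian weight on a fixed time scale, and the anisotropy is produced afterwards by dilating the operator within the class $\Kol$ (Lemma \ref{l3}), with all estimates proved uniformly in the dilation parameter $\l\in[0,1]$ because non-homogeneous operators are not scale invariant, and then by left-translations and Chapman--Kolmogorov to restore general $y$ and arbitrary $T-t\le T_0$. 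Your sketch leaves both the uniform-in-$\l$ issue and the construction of an adequate intrinsic weight entirely open, so as written the proposal does not yield \eqref{thes}.
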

The exponent $\frac{Q}{2}$ appearing in estimate \eqref{thes} is optimal, as it can be easily seen
in the case of constant-coefficient Kolmogorov operators whose fundamental solution is explicit
(see \eqref{e22and}). Notice the difference with respect to the uniformly parabolic case: for
instance in $\R^{3}$, for the heat operator $\p_{xx}+\p_{yy}+\p_{t}$ we have $Q=2$, while for the
prototype Kolmogorov operator $\p_{xx}+x\p_{y}+\p_{t}$ we have $Q=4$. To explain the specific form
of the Gaussian exponent in \eqref{thes} and the role of the constant $Q$ in \eqref{e77} (that is
typically greater than the Euclidean dimension $d$), we recall some basic facts about Kolmogorov
operators and the Lie group structures on $\R^{d+1}$ naturally associated with them.

Let us first rewrite \eqref{PDE} in the compact form
\begin{equation}\label{L}
 Lu=\div(AD  u)+\div(au)+cu+Yu=0,
\end{equation}
{where $D=(\p_{x_{1}},\dots,\p_{x_{d}})$ denotes the gradient in $\R^{d}$,}
$A:=\left(a_{ij}\right)_{1\leq i,j\leq d}$, $a:=\left(a_i\right)_{1\leq i\leq d}$ with
$a_{ij}=a_{i}\equiv 0$ for $i>m_0$ or $j>m_0$, and
\begin{equation}\label{e14c}
 Y:=
 \langle B x, D   \rangle + \partial_t,\qquad (t,x)\in\R\times\R^{d}.
\end{equation}
The {\it constant-coefficient Kolmogorov operator}
\begin{equation}\label{e14b}
 \LL:= \frac{1}{2}\sum_{i=1}^{m_{0}}\p_{x_{i}x_{i}}+ Y
\end{equation}
will be referred to as the \emph{principal part of $L$}. It is known that Assumption \ref{assB} is
equivalent to the hypoellipticity of $\LL$: in fact, Assumption \ref{assB} is also equivalent to
the well-known H\"ormander's condition, which in our setting reads:
\begin{align}
 {\rm rank\ Lie}\left(\p_{x_{1}},\dots,\p_{x_{m_{0}}},Y\right)(t,x)=d+1,\qquad\text{for all }(t,x)\in \R^{d+1},
\end{align}
where Lie$\left(\p_{x_{1}},\dots,\p_{x_{m_{0}}},Y\right)$ denotes the Lie algebra generated by the
vector fields $\p_{x_{1}},\dots,\p_{x_{m_{0}}}$ and $Y$ (see Proposition 2.1 in
\cite{LanconelliPolidoro}). Thus operator $L$ can be regarded as a perturbation of its principal
part $L_{0}$: roughly speaking, Assumption \ref{assA} ensures that the sub-elliptic structure of
$L_{0}$ is preserved under perturbation.

Constant-coefficient Kolmogorov operators are naturally associated to {\it linear} stochastic
differential equations: indeed, $\LL$ is the infinitesimal generator of the $d$-dimensional SDE
\begin{equation}\label{SDE}
  dX_{t}=B X_{t}dt+\s dW_{t},
\end{equation}
where $W$ is a standard $m_{0}$-dimensional Brownian motion and $\s$ is the
$(d\times m_{0})$-matrix 
\begin{equation}\label{sigm}
 \s=\begin{pmatrix}
   I_{m_{0}} \\
   0 \
 \end{pmatrix}.
\end{equation}
The solution $X$ of \eqref{SDE} is a Gaussian process with transition density
\begin{align} \label{e22and}
 \Gg(t,x;T,y)
 &=  \frac{1}{  \sqrt{(2\pi)^{d}\det \mathcal{C}(T-t)} }
    \exp\left(-\frac{1}{2}\langle\mathcal{C}(T-t)^{-1} \big(y - e^{(T-t)B}x)\big),
    \big(y -e^{(T-t)B}x\big)\rangle\right)
\end{align}
for  $t<T$ and $x,y\in\R^{d}$, where
\begin{align}
\mathcal{C}(t)=\int\limits_{0}^{t}\left(e^{s B}\s\right)\left(e^{s B}\s\right)^{\ast}ds
\end{align}
is the covariance matrix of $X_{t}$. Assumption \ref{assB} ensures (actually, is equivalent to the
fact) that $\mathcal{C}(t)$ is positive definite for any positive $t$. Moreover, $\Gg$ in
\eqref{e22and} is the fundamental solution of $\LL$ and the function
\begin{align}
 u(t,x):=E\left[\phi\left(X_{T}\right)\mid X_{t}=x\right]=\int_{\R^{d}}\Gg(t,x;T,y)\phi(y)dy,\qquad t<T,\ x\in\R^{d},
\end{align}
solves the backward Cauchy problem
\begin{equation}\label{PC}
  \begin{cases}
    \LL u(t,x)=0,\qquad  & t<T,\ x\in\R^{d}, \\
    u(T,x)=\phi(x) & x\in\R^{d},
  \end{cases}
\end{equation}
for any bounded and continuous function $\phi$.

Operator $\LL$ has some remarkable invariance properties that were first studied in
\cite{LanconelliPolidoro}. Denote by $\ell_{(\t,\x)}$, for $(\t,\x)\in\R^{d+1}$, the
left-translations in $\R^{d+1}$ defined as
\begin{equation}\label{eq:translation}
 \ell_{(\t,\x)}(t,x):=(\t,\x)\circ (t,x):=\left(t+\t,x+{e^{t B}}\x\right),
\end{equation}
Then, $\LL$ is invariant with respect to $\ell_{\z}$ in the sense that
 $$L_{0}\left(u\circ \ell_{\z}\right)=\left(L_{0}u\right)\circ \ell_{\z},\qquad \z\in\R^{d+1}.$$
Moreover, let $\Dil(r)$ be as in \eqref{e15}: then $\LL$ is homogeneous with respect to the
dilations in $\R^{d+1}$ defined as
\begin{equation}\label{e066}
 \d_{r}(t,x):=\left(r^2t,\Dil(r)x\right), \qquad r>0,
\end{equation}
{\it if and only if} all the $\ast$-blocks of $B$ in \eqref{e65b} are null
(\cite{LanconelliPolidoro}, Proposition 2.2): in that case, we have
\begin{equation}\label{e200}
 \LL(u\circ\delta_r)=r^{2}(\LL u)\circ\delta_r,\qquad r>0.
\end{equation}
Since the Jacobian $J \Dil(r)$ equals $r^{Q}$, the natural number $Q$ in \eqref{e77} is usually
called the {\it homogeneous dimension} of $\mathbb{R}^{d}$ with respect to $(\Dil(r))_{r>0}$.

\medskip Now let us consider the case of a Kolmogorov operator $L\in\Kol$ with variable coefficients.
It turns out that the invariance properties of the principal part $L_{0}$ are inherited by $L$ in
terms of ``invariance within the class $\Kol$''. More explicitly, for the left-translations we
have
\begin{remark}\label{rtrasl}
Let $\z\in\R^{d+1}$ and $L\in\Kol$. If $u$ is a solution of
  $Lu=0$
then $v:=u\circ \ell_{\z}$ solves
  $L^{(\z)}v=0$
where $L^{(\z)}$ is obtained from $L$ by left-translating its coefficients, that is
$L^{(\z)}=L\circ \ell_{\z}$. Moreover, operator $L^{(\z)}$ still belongs to $\Kol$.
\end{remark}
As for dilations, we have to distinguish between {\it homogeneous Kolmogorov operators} (i.e.
operators with null $\ast$-blocks in \eqref{e65b}) and general Kolmogorov operators.
\begin{remark}\label{rdil}
Let $\l>0$ and $L\in\Kol$ be a homogeneous Kolmogorov operators. If $u$ is a solution of
  $Lu=0$
then $v:=u\circ\d_{\l}$ solves
  $L^{\l}v=0$
where $L^{\l}$ is obtained from $L$ by dilating its coefficients, that is $L^{\l}=L\circ \d_{\l}$.
Moreover, operator $L^{\l}$ still belongs to $\Kol$.
\end{remark}
By virtue of the previous remarks, it turns out that
the crucial step to achieve estimate
\eqref{thes} is to prove it for $t=0$, $T=1$ and $y=0$, that is
\begin{align}\label{thes1}
 \G(0,x;1,0)\le C\exp\left(-\frac{|x|^2}{C}\right),\qquad \ x\in\R^{d},
\end{align}
with {\it $C$ dependent only on $\cost$ and $B$}. Indeed, once \eqref{thes1} is proved, then the
general estimate for $L\in\Kol$ follows from the invariance of the class $\Kol$ with
respect to the left-translations $\ell$ and the intrinsic dilations $\d$. 
It is then clear that the factor $(T-t)^{-\frac{Q}{2}}$ and the term $\left(x-e^{-(T-t)B}y\right)$
in the exponential of \eqref{theshom} come directly from the use of translations and dilations in
the intrinsic Lie group structure.

Estimate \eqref{thes} is consistent with the following Gaussian upper bound for Kolmogorov
operators with {\it H\"older continuous coefficients}, proved in \cite{Polidoro2} and
\cite{DiFrancescoPascucci2} (see also \cite{Menozzi}, \cite{BallyKohatsu}) by means of the
classical parametrix method:
\begin{align}\label{theshom}
 \G(t,x;T,y)\le C\G_{0}(t,x;T,y),\qquad t<T,\, x\in\R^{d},
\end{align}
with $C=C(M)$ and $\G_{0}$ as in \eqref{e22and} with $\s=\begin{pmatrix}
   \sqrt{2M} I_{m_{0}} \\
   0 \
 \end{pmatrix}.
$ Notice that for {\it homogeneous} Kolmogorov operators, the constant $C$ in estimate
\eqref{thes} is independent of $T-t$. 

\medskip In the case of {\it non-homogeneous} Kolmogorov operators, which is the main focus of this
paper, the lack of homogeneity makes the proof of \eqref{thes1} rather involved. The invariance
property of Remark \ref{rtrasl} remains unchanged, while the scaling argument cannot be used
anymore. However, we have the following result (see Remark 3.2 in \cite{LanconelliPolidoro}).
\begin{lemma}\label{l3}
Let $\l>0$ and $L\in\Kol$. If $u$ is a solution of
  $Lu=0$
then $v:=u\circ\d_{\l}$ solves
  $L^{\l}v=0$
where
\begin{equation}\label{Lr}
  L^{\l}u:=\div(A^{(\l)}D  u)+\langle B^{(\l)}x,D  u\rangle+\partial_t
 u+\div(a^{(\l)}u)+c^{(\l)}u(t,x),
\end{equation}
with
  $$A^{(\l)}(t,x)=A\left(\d_{\l}(t,x)\right),\qquad a^{(\l)}(t,x)=\l a\left(\d_{\l}(t,x)\right),\qquad c^{(\l)}(t,x)=\l^{2}
  c\left(\d_{\l}(t,x)\right),$$
and $B^{(\l)}=\l^{2}\Dil_{\l} B\Dil_{\frac{1}{\l}}$, that is
\begin{equation}\label{e65bl}
  B^{(\l)}=\begin{pmatrix}
 \l^{2} B_{1,1} & \l^{4} B_{1,2} & \cdots & \l^{2\nu} B_{1,\nu} & \l^{2\nu+2} B_{1,\nu+1} \\
 B_1 & \l^{2} B_{2,2} &\cdots& \l^{2\nu-2} B_{2,\nu} & \l^{2\nu} B_{2,\nu+1} \\
 0 & B_2 &\cdots& \l^{2\nu-4} B_{3,\nu}& \l^{2\nu-2} B_{3,\nu+1} \\ \vdots & \vdots
 &\ddots& \vdots&\vdots \\ 0 & 0 &\cdots& B_{\nu}& \l^{2} B_{\nu+1,\nu+1}
  \end{pmatrix},
\end{equation}
where $B_{i,j}$ denotes the $\ast$-block in the $(i,j)$-th position in \eqref{e65b}.
\end{lemma}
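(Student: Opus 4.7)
My plan is a direct chain-rule computation. Set $(s,z) := \d_\l(t,x) = (\l^2 t, \Dil(\l)x)$, so that $v(t,x) = u(s,z)$. Following the block decomposition of the state variable, each coordinate $x_j$ lies in some block labelled by an index $k_j \in \{0,\dots,\nu\}$, so that $\Dil(\l)_{jj} = \l^{2k_j+1}$; in particular $k_j = 0$ precisely when $j \le m_0$. The chain rule then gives $\p_t v = \l^2 \p_s u$, $\p_{x_j} v = \l^{2k_j+1} \p_{z_j} u$, and similarly for second-order derivatives, while $\p_{z_i}$ applied to a function pulled back from $(t,x)$ acts as $\l^{-(2k_i+1)} \p_{x_i}$ on its $(t,x)$-representation.

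Next I would evaluate $Lu$ at $(s,z)$ term by term and rewrite it in $(t,x)$-coordinates. In the principal part $\sum_{i,j=1}^{m_0}\p_{z_i}(a_{ij}\p_{z_j}u)$ one has $k_i = k_j = 0$, so the two factors of $\l^{-1}$ combine with the definition $A^{(\l)}(t,x) := A(\d_\l(t,x))$ to produce $\l^{-2}\div(A^{(\l)}Dv)(t,x)$. The first-order term contributes $\l^{-1}\div(\tilde a\, v)$ with $\tilde a := a(\d_\l(\cdot))$, and absorbing one further $\l$ into the definition $a^{(\l)}:=\l\, a(\d_\l(\cdot))$ yields the overall factor $\l^{-2}\div(a^{(\l)}v)$. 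Similarly $cu(s,z) = \l^{-2} c^{(\l)}(t,x) v(t,x)$ with $c^{(\l)} := \l^2 c(\d_\l(\cdot))$, and $\p_s u = \l^{-2}\p_t v$. The crucial term is the drift: substituting $z_j = \l^{2k_j+1}x_j$ and $\p_{z_i}u = \l^{-(2k_i+1)}\p_{x_i}v$ yields
\begin{align}
\langle Bz, Du(s,z)\rangle \;=\; \sum_{i,j=1}^{d} b_{ij}\,\l^{2(k_j-k_i)}\, x_j\, \p_{x_i} v(t,x),
\end{align}
and multiplying by $\l^2$ (to match the uniform $\l^{-2}$ prefactor picked up by all other terms) gives $\langle B^{(\l)}x, Dv\rangle$ with $B^{(\l)}_{ij} = \l^{2(k_j-k_i+1)}\, b_{ij}$.

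Collecting everything produces the pointwise identity $Lu(s,z) = \l^{-2}\, L^\l v(t,x)$, so $Lu=0$ transfers to $L^\l v=0$. The one step that requires care — and which I expect to be the main obstacle, in the sense of bookkeeping rather than idea — is verifying that the matrix $B^{(\l)}$ just defined agrees with the explicit block form \eqref{e65bl}: for the fixed sub-diagonal block $B_k$ located at block-position $(k,k-1)$ one has $k_j-k_i+1 = (k-1)-k+1 = 0$, so $B_k$ is left untouched, while for a $\ast$-block at block-position $(i',j')$ (in the $1$-indexed convention of the lemma) the exponent reads $2(j'-i'+1)$, reproducing exactly the powers $\l^{2}, \l^{4},\ldots,\l^{2\nu+2}$ appearing in \eqref{e65bl}. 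The key qualitative point — that the sub-diagonal blocks carrying the Hörmander condition remain intact — is built into the scaling and is what makes the result meaningful; the rest of the proof is pure chain rule.
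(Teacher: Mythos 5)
Your chain-rule computation is correct, and it is the natural (indeed, essentially the only) way to prove the lemma. The paper itself does not give a proof here but simply cites Remark 3.2 in \cite{LanconelliPolidoro}, so what you have supplied is the verification that the paper omits. All the bookkeeping checks out: with $\Dil(\l)_{jj}=\l^{2k_j+1}$ your exponent $2(k_j-k_i+1)$ gives $\l^0=1$ on the sub\-diagonal blocks $B_1,\dots,B_\nu$ and $\l^{2(j'-i'+1)}$ on the $\ast$-block at block position $(i',j')$, reproducing \eqref{e65bl} exactly.

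One observation worth recording: your derived formula $B^{(\l)}_{ij}=\l^{2(k_j-k_i+1)}b_{ij}$ agrees with the explicit block matrix \eqref{e65bl}, but it is equivalent to $B^{(\l)}=\l^2\,\Dil_{1/\l}\,B\,\Dil_\l$ rather than the compact expression $\l^2\,\Dil_\l\,B\,\Dil_{1/\l}$ printed in the lemma. Indeed $(\l^2\Dil_\l B\Dil_{1/\l})_{ij}=\l^{2(k_i-k_j+1)}b_{ij}$, which would leave the super-diagonal $\ast$-block $B_{1,2}$ unscaled and put a factor $\l^4$ on $B_1$, contradicting \eqref{e65bl}. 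The two coincide only on the block-diagonal (where $k_i=k_j$). So the compact formula in the statement contains a typo — the two dilations should be swapped — and your computation, which matches \eqref{e65bl}, is the correct reading. You might mention this explicitly rather than leaving the reader to reconcile the two.
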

More importantly, {\it we will show that if $L\in\Kol$ then the fundamental solution $\G^{\l}$ of
$L^{\l}$ in \eqref{Lr} satisfies estimate \eqref{thes1} uniformly with respect to $\l\in[0,1]$,
that is with the constant $C$ dependent only on $\cost$ and $B$:} intuitively, this is due to the
fact that, on the one hand, the dilations $\d_{\l}$ do not affect the blocks $B_{1},\dots,B_{\n}$
in \eqref{e65bl} (this guarantees the hypoellipticity of the operator, uniformly with respect to
$\l$); on the other hand, the ``new'' $\ast$-blocks in \eqref{e65bl} are bounded functions of
$\l\in[0,1]$.

The first step in the proof of \eqref{thes1} 
consists in proving the local boundedness of non-negative weak solutions of equation \eqref{PDE}
(cf. Theorem \ref{moser}): this result slightly extends the Moser's estimates obtained in Cinti et
al. \cite{CPP2008} where the lower order terms were not included. From the Moser's estimates we
get a Nash upper bound for the fundamental solution in Theorem \ref{t4}. Next we employ a method
by Aronson \cite{Aronson} which provides the crucial estimates in Theorem \ref{t2} and the desired
Gaussian upper bound: here we extend the previous scaling argument to non-homogeneous
Kolmogorov operators and then prove Theorem \ref{t1} for the general class $\Kol$. 


\section{Moser's iterative method}
In this section we adapt the Moser's iterative method to prove the local boundedness of weak
solutions of $L$. In the classical setting, Moser's approach combines Caccioppoli type estimates
with the embedding Sobolev inequality: for Kolmogorov operators that are not uniformly parabolic,
Caccioppoli estimates provide $L^{2}_{\text{\rm loc}}$-bounds only for the first $m_{0}$
derivatives (cf. Assumption \ref{assA}) and therefore a naive extension of the classical approach
is not possible. Nevertheless, this problem can be overcome by using the original argument
proposed in \cite{PP2004}, that is based on some ad hoc Sobolev type inequalities for local
solutions to \eqref{PDE}.

To introduce the main result of this section, Theorem \ref{moser} below, we recall the definition
of weak solution.
\begin{definition}
We say that $u$ is a \emph{weak sub-solution} of \eqref{PDE} in a domain $\O$ of $\R^{d+1}$ if
$$u,\p_{x_{1}}u,\dots, \p_{x_{m_{0}}}u, Yu\in L^{2}_{\text{\rm loc}}(\Omega)$$ and for any
non-negative $\varphi\in C_0^{\infty}(\Omega)$ we have
\begin{equation}
 \int_{\Omega}-\langle AD  u,D  \phi\rangle-\langle a,D  \phi\rangle u+\varphi cu+\varphi Yu\geq0.
\end{equation}
A function $u$ is a \emph{weak super-solution} if $-u$ is a weak sub-solution. If $u$ is a weak
sub and super-solution, then we say that $u$ is a \emph{weak solution}.
\end{definition}
In the following statement, $R_{r}(z_{0})$ denotes the cylinder
\begin{equation}\label{e76}
 R_{r}(z_{0}):= z_0 \circ \d_{r} \left(R_{1}\right)=\{z\in\R^{d+1}\mid z= z_{0} \circ \d_{r}(\z), \, \z \in
 R_{1}\},\qquad z_{0}\in\R^{d+1},\ r>0,
\end{equation}
where
\begin{align}
 R_{1} = \{(t,x)\in\R\times\R^{d}\mid  |t|<1,\, |x|<1\}.
\end{align}
\begin{theorem}\label{moser}
Let $L\in\Kol$, $\l\in[0,1]$ and $L^{\l}$ as in \eqref{Lr}. Let $u$ be a non-negative weak
solution of $L^{\l}u=0$ in a domain $\O$. Let $z_{0}\in \O$ and $0<\r<r\le r_{0}$ be such that
$r-\r<1$ and $\overline{R_{r}(z_{0})}\subseteq\O$. Then, for every $p>0$ there exists a positive
constant $C=C(\cost,r_{0},p)$ such that
\begin{align}\label{e50}
 \sup_{R_{\r}(z_{0})}u^{p}\le\frac{C}{(r-\r)^{Q+2}}\int\limits_{R_{r}(z_{0})}u^{p}.
\end{align}
Estimate \eqref{e50} also holds for every $p<0$ such that $u^{p}\in L^{1}(R_{r}(z_{0}))$.
\end{theorem}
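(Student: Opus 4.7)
The plan is to adapt the Moser iteration scheme of \cite{CPP2008}, built on the ad hoc Sobolev-type inequality of \cite{PP2004}, to the operator $L^{\lambda}$, tracking carefully that the resulting constants depend only on $M$, $B$, $r_0$ and $p$. The starting observation is that the coefficients of $L^{\lambda}$ are uniformly bounded in $\lambda\in[0,1]$: indeed $A^{(\lambda)}$ still satisfies \eqref{ellipticity} with the same $\mu$, Lemma \ref{l3} gives $\|a^{(\lambda)}\|_{\infty}\le M$ and $\|c^{(\lambda)}\|_{\infty}\le M$, and the prefactors $\lambda^{2k}$ in \eqref{e65bl} all lie in $[0,1]$, so the $\ast$-blocks of $B^{(\lambda)}$ are dominated in norm by those of $B$. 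Hence any constant produced from $L^{\infty}$ bounds of the coefficients of $L^{\lambda}$ is automatically uniform in $\lambda$.

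The first step is a Caccioppoli-type energy inequality. For a non-negative cutoff $\varphi\in C_0^{\infty}(R_r(z_0))$ with $\varphi\equiv 1$ on $R_{\rho}(z_0)$, satisfying $|D_{m_0}\varphi|\le C/(r-\rho)$ and $|Y\varphi|\le C/(r-\rho)^2$ (built from the intrinsic cylinders \eqref{e76}), I would test the weak formulation of $L^{\lambda}u=0$ against $\varphi^2 u^{p-1}$, after the usual truncation $u\mapsto\min(u,N)+\varepsilon$ to legitimize the chain rule on $u^{p-1}$. The transport term is integrated by parts via $\int\varphi^2 u^{p-1} Yu=\tfrac{1}{p}\int\varphi^2 Y(u^p)=-\tfrac{2}{p}\int \varphi (Y\varphi) u^p$, and the contributions of $a^{(\lambda)}$, $c^{(\lambda)}$ and the extra drift from the new $\ast$-blocks of $B^{(\lambda)}$ are absorbed via Young's inequality and their uniform $L^{\infty}$ bounds. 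The outcome is
\begin{equation*}
\int_{R_r(z_0)} \varphi^2 \bigl|D_{m_0}(u^{p/2})\bigr|^2 \le \frac{C}{(r-\rho)^2}\int_{R_r(z_0)} u^p,
\end{equation*}
with $C=C(M,p)$ independent of $\lambda$.

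The second step is to combine this with the Sobolev-type embedding of \cite{PP2004}: for functions $v$ compactly supported in $R_r(z_0)$ and satisfying the natural integrability requirements on $D_{m_0}v$ and $Y^{\lambda}v$ (where $Y^{\lambda}=\langle B^{(\lambda)}x,D\rangle+\partial_t$), one has $\|v\|_{L^{2\kappa}}\le C(\|D_{m_0}v\|_{L^2}+\|Y^{\lambda}v\|_{*})$ with some $\kappa=\kappa(Q)>1$ and a suitable dual norm $\|\cdot\|_{*}$. Applied to $v=\varphi u^{p/2}$, and using the equation to express $Y^{\lambda}v$ in terms of $\div(A^{(\lambda)}D u)$ plus controlled lower-order pieces, this produces the reverse-H\"older inequality
\begin{equation*}
\Bigl(\int_{R_{\rho}(z_0)} u^{p\kappa}\Bigr)^{1/\kappa} \le \frac{C}{(r-\rho)^{Q+2}}\int_{R_r(z_0)} u^p,
\end{equation*}
the exponent $Q+2$ arising because $|R_r(z_0)|\sim r^{Q+2}$ under the dilation $\delta_r$. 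Finally, a standard Moser iteration with radii $r_k=\rho+2^{-k}(r-\rho)$ and exponents $p_k=p\kappa^k$ yields \eqref{e50} for $p>0$; the telescoping product converges since $\sum k/\kappa^k<\infty$. For $p<0$, one runs the same scheme on $(u+\varepsilon)^p$, which by convexity plays the role of a sub-solution of a perturbed equation in an appropriate sense, and sends $\varepsilon\to 0^+$, invoking the hypothesis $u^p\in L^1(R_r(z_0))$ for the passage to the limit.

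The main obstacle I anticipate is uniformity in $\lambda$ of the Sobolev-type inequality of \cite{PP2004}, whose proof relies on representation formulae through the fundamental solution of the principal part $L_0^{\lambda}$ of $L^{\lambda}$, an object whose shape depends on $\lambda$ through the new $\ast$-blocks of $B^{(\lambda)}$. The key point is that as $\lambda\in[0,1]$ these blocks vary within a compact family, while the essential blocks $B_1,\ldots,B_\nu$ responsible for hypoellipticity are untouched; consequently the constants appearing in the representation depend continuously on $\lambda$ and remain uniformly bounded over $[0,1]$.
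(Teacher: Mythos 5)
Your proposal follows essentially the same route as the paper: a Caccioppoli inequality obtained by testing with $\varphi^2u^{p-1}$ (the paper's $2qu^{2q-1}\psi^2$ with $p=2q$, Theorem \ref{Caccioppoli}), the Sobolev-type inequality of \cite{PP2004} proved by representation through the fundamental solution of the principal part with the potential estimates of \cite{CPP2008} uniform in $\lambda$ (Theorem \ref{Sobolev}), and Moser iteration; your discussion of why all constants are uniform in $\lambda\in[0,1]$ (unchanged $B_1,\dots,B_\nu$, compactly varying $\ast$-blocks, $\|a^{(\lambda)}\|_\infty\le M$, $\|c^{(\lambda)}\|_\infty\le M$) is exactly the paper's point. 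Two steps, however, are stated in a way that would not deliver \eqref{e50} as written. First, your one-step reverse-H\"older inequality carries the factor $(r-\rho)^{-(Q+2)}$, justified by the volume count $|R_r(z_0)|\sim r^{Q+2}$; this is not what Caccioppoli plus Sobolev give, and iterating it literally accumulates a power $(Q+2)\sum_{j\ge0}\kappa^{-j}=(Q+2)\tfrac{Q+2}{2}$ of $(r-\rho)^{-1}$, i.e. a strictly weaker bound than \eqref{e50}. The correct inequality to iterate (the paper's \eqref{e55}) has a fixed power $(r-\rho)^{-2}$ per step at the level of $\|u^q\|_{L^{2\kappa}}$ versus $\|u^q\|_{L^2}$, and the exponent $Q+2$ in \eqref{e50} arises precisely from summing the geometric series over the shrinking radii, not from a volume factor in a single step.

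Second, your Caccioppoli constant is not $C(M,p)$ uniformly in the exponent: with the test function $\varphi^2u^{p-1}$ the constant blows up as $p\to 1$ (the paper's $q\to\frac12$), and in the iteration the exponents $p_k=p\kappa^k$ can pass arbitrarily close to $1$; moreover the combined inequality carries an extra factor $\sqrt{|q|}$. The paper handles this by choosing the starting exponent so that every iterate keeps a fixed safety distance $\delta$ from the degenerate value, and then removes the restriction on $p$ at the end by monotonicity of the $L^p$-means; without such a device the conclusion ``for every $p>0$'' does not follow from the iteration as you describe it. Finally, for $p<0$ your convexity trick on $(u+\varepsilon)^p$ is a viable alternative (provided the lower-order terms $a^{(\lambda)},c^{(\lambda)}$ are absorbed when checking the sub-solution property), but note the paper proceeds differently: it runs the Caccioppoli estimate directly for super-solutions with $q<\frac12$ and proves the Sobolev inequality for super-solutions via the time-reversed auxiliary operator $\hat L_0$, so negative exponents are iterated directly.
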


The proof of Theorem \ref{moser} requires two auxiliary results of independent interest. In the
following statement, we use the notation $D_{m_0}=\left(\p_{x_{1}},\dots,\p_{x_{m_{0}}}\right)$
and {$\|B\|$ for the norm of $B$ as a linear operator}.
\begin{theorem}[Caccioppoli type inequality]\label{Caccioppoli}
Let $L\in\Kol$ and $u$ be a non-negative weak sub-solution of \eqref{PDE} in $R_r(z_0)$, with
$0<\r<r\le r_{0}$ such that $r-\r<1$. If $u^q\in L^{2}(R_r(z_0))$ for some $q>\frac{1}{2}$, then
$D _{m_0}u^q\in L^{2}(R_{\rho}(z_0))$ and there exists a constant $C=C(M,\|B\|)$ such that
\begin{equation}\label{caccioppoli}
 \int_{R_{\rho}(z_0)}|D _{m_0}u^q|^2\leq C\left(\frac{q}{2q-1}\right)^2\frac{q}{(r-\rho)^2}\int_{R_r(z_0)}|u^q|^2.
\end{equation}
If $u$ is a non-negative weak super-solution, then the previous inequality holds for
$q<\frac{1}{2}$.
\end{theorem}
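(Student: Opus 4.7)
I follow the classical Caccioppoli scheme adapted to the degenerate Kolmogorov setting: test the weak sub-solution definition against $\varphi=u_\e^{2q-1}\psi^2$ for a suitable cutoff $\psi$ and regularization $u_\e=u+\e$ (with an auxiliary truncation of $u$ at height $k$ when $q>1$ to keep $\varphi$ in the admissible class), use the partial ellipticity \eqref{ellipticity} of $A$ to extract a lower bound on $\int|D_{m_0}u^q|^2\psi^2$, and absorb the cross terms by Young's inequality. By Remark \ref{rtrasl} the class $\Kol$ and the cylinders $R_r(z_0)$ are invariant under the left translations $\ell_{z_0}$, so I may normalize $z_0=0$; then $R_r=\d_r(R_1)$ and any coordinate $x_j$ in $R_r$ is controlled by $r^{2k_j+1}$, where $k_j$ denotes the block in \eqref{e65b} containing the index $j$. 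I pick $\psi\in C_0^\infty(R_r)$ with $\psi\equiv 1$ on $R_\rho(0)$ by pulling a fixed bump back through $\d_{r-\rho}$, arranged so that
\[
|D_{m_0}\psi|\le \frac{C}{r-\rho},\qquad |Y\psi|=|\p_t\psi+\langle Bx,D\psi\rangle|\le \frac{C}{(r-\rho)^2},
\]
with $C=C(\|B\|,r_0)$; the second bound uses that in every nonzero entry $b_{ij}$ of $B$ the coordinate $x_j$ and the derivative $\p_{x_i}\psi$ sit in blocks whose indices differ by at most one, so the factor $r^{2k_j+1}$ on $x_j$ compensates the extra $(r-\rho)$-loss in $\p_{x_i}\psi$.

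\textbf{Main computation.} Expanding $D\varphi=(2q-1)u_\e^{2q-2}\psi^2\,Du+2u_\e^{2q-1}\psi\,D\psi$ and inserting $\varphi$ into the weak sub-solution inequality, after rearrangement I obtain
\[
(2q-1)\int u_\e^{2q-2}\psi^2\langle AD_{m_0}u,D_{m_0}u\rangle\le -2\int u_\e^{2q-1}\psi\langle AD_{m_0}u,D_{m_0}\psi\rangle-\int u\langle a,D\varphi\rangle+\int cu\varphi+\int\varphi\,Yu.
\]
The ellipticity \eqref{ellipticity} bounds the left-hand side from below by $(2q-1)\m^{-1}q^{-2}\int|D_{m_0}u_\e^q|^2\psi^2$, via $|D_{m_0}u_\e^q|^2=q^2u_\e^{2q-2}|D_{m_0}u|^2$. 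The first cross term and the $\langle a_{m_0},D_{m_0}u\rangle$-piece of $\langle a,D\varphi\rangle$, both linear in $D_{m_0}u$, are absorbed into this quadratic form by Young's inequality with a parameter chosen so that half of it stays on the left; what remains is controlled by $C\int u_\e^{2q}(|D_{m_0}\psi|^2+\psi^2)$ with $C=C(\m,M)$ (using $\|a_i\|_\infty,\|c\|_\infty\le M$).

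\textbf{Drift term and conclusion.} For $\int\varphi\,Yu$ I apply the chain rule $u_\e^{2q-1}Yu=\tfrac{1}{2q}Y(u_\e^{2q})$ and the formal $L^2$-adjoint identity $Y^\ast=-Y-\mathrm{tr}(B)$ (valid against the compactly-supported $\psi^2$), giving
\[
\int\varphi\,Yu=-\frac{1}{2q}\int u_\e^{2q}\bigl(Y(\psi^2)+\mathrm{tr}(B)\psi^2\bigr),
\]
and hence $|\int\varphi\,Yu|\le (C/q)\int u_\e^{2q}(|\psi\,Y\psi|+\psi^2)$. Plugging in the cutoff bounds (with $\psi^2\le C/(r-\rho)^2$ since $r-\rho<1$), collecting, and dividing through by $(2q-1)/(\m q^2)$ yields \eqref{caccioppoli} with $C=C(M,\|B\|)$. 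Sending $\e\downarrow 0$ and $k\uparrow\infty$ finishes the sub-solution case; the super-solution case with $q<\tfrac12$ runs identically, since reversing the sign of the weak inequality and of $2q-1$ cancels out.

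\textbf{Main obstacle.} The only genuinely non-routine step is producing the cutoff with $|Y\psi|\le C/(r-\rho)^2$ despite $Y$ carrying the unbounded linear drift $\langle Bx,D\rangle$; this is precisely where the Lie-group geometry encoded in the block structure \eqref{e65b} and the adaptation of $R_r$ to the intrinsic dilations $\d_r$ become essential. The Young absorption, the regularization, and the truncation are standard, and the exact polynomial factor $(q/(2q-1))^2\cdot q$ on the right-hand side is simply the outcome of a careful bookkeeping of the cross-term coefficients.
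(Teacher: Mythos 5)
Your argument is essentially the paper's proof: the same test function $u^{2q-1}\psi^2$ (the paper normalizes it as $2q u^{2q-1}\psi^2$ and defers your $\e$-regularization/truncation to a closing remark), the same absorption of the cross terms via \eqref{ellipticity} and Young's inequality, the same treatment of $\int\varphi\,Yu$ by the divergence theorem picking up $\mathrm{tr}(B)$, and the same intrinsically adapted cutoff (the paper takes $\psi=\chi\left(\sqrt{|t-t_0|}\right)\chi\left(\|z_0^{-1}\circ(x,0)\|\right)$ and bounds the resulting error term $h(\psi)$ by $Cq/(r-\rho)$, which plays exactly the role of your bound on $Y\psi$, both sufficing since $r-\rho<1$). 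The only inaccuracy is your justification of that cutoff bound --- the nonzero $\ast$-blocks of $B$ can couple blocks differing by more than one --- but such couplings involve coordinates of higher homogeneity and are harmless, so your conclusion stands and the proof matches the paper's.
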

\begin{remark}\label{rlambda1}
Since $C$ in \eqref{caccioppoli} depends only on $M$ and $\|B\|$, then the estimate holds also for
$L^{\l}$ in \eqref{Lr}, uniformly with respect to $\l\in[0,1]$.
\end{remark}
\begin{proof}
Let $u$ be a non-negative weak sub-solution of \eqref{PDE} in $R_r(z_0)$: this means that
\begin{equation}\label{subsolution}
 \int_{R_r(z_0)}-\langle AD  u,D \varphi\rangle-\langle a,D \varphi\rangle
 u+\varphi cu+\varphi Yu\geq 0
\end{equation}
for any non-negative $\varphi\in H_0^1(R_r(z_0))$. We choose $q>\frac{1}{2}$ and assume that
$u^q\in L^{2}(R_r(z_0))$.  Let $\varphi=2qu^{2q-1}\psi^2$ in \eqref{subsolution}, where $\psi\in
C_0^{\infty}(R_r(z_0))$; then we have
\begin{align}
 \langle AD  u,D \varphi\rangle&=\langle AD  u,D (2qu^{2q-1}\psi^2)\rangle\\
 &=2q(2q-1)\psi^2u^{2q-2}\langle AD  u,D  u\rangle+4q\psi u^{2q-1}\langle AD
 u,D \psi\rangle\\ &=\frac{2(2q-1)}{q}\psi^2\langle AD  u^q,D  u^q\rangle+4\psi
 u^q\langle AD  u^q,D \psi\rangle
\intertext{and}
 \int_{R_r(z_0)}\varphi Yu&=\int_{R_r(z_0)}2qu^{2q-1}\psi^2Yu\\ &=\int_{R_r(z_0)}\psi^2Y(u^{2q})\\
 &=\int_{R_r(z_0)}Y(u^{2q}\psi^2)-2\psi u^{2q}Y\psi\\ &=\int_{R_r(z_0)}-\text{\rm tr}(B)u^{2q}\psi^2-2\psi
 u^{2q}Y\psi,
\end{align}
where in the last equality we used the divergence theorem. Moreover,
\begin{align}
 \langle a,D \varphi\rangle u&=\langle a,D (2qu^{2q-1}\psi^2)\rangle u\\ &=
 2q(2q-1)\psi^2u^{2q-1}\langle a,D  u\rangle+4qu^{2q}\psi\langle a,D \psi\rangle\\ &=
 2(2q-1)\psi^2u^q\langle a,D  u^q\rangle+4qu^{2q}\psi\langle a,D \psi\rangle.
\end{align}
Therefore, inequality \eqref{subsolution} can be rewritten as
\begin{align}
 \int_{R_r(z_0)}\frac{2(2q-1)}{q}\psi^2\langle AD  u^q,D  u^q\rangle \leq&\ \int_{R_r(z_0)}
 4u^q|\psi||\langle AD  u^q,D \psi\rangle|+2(2q-1)\psi^2u^q|\langle a,D  u^q\rangle|\\
 &+\int_{R_r(z_0)}u^{2q}h(\psi),
\end{align}
where
\begin{equation}\label{def h}
 h(\psi):=|\text{\rm tr}B|\psi^2+2|\psi||Y\psi|+4q|\psi||\langle a,D \psi\rangle|+2q\psi^2|c|.
\end{equation}
Observe that for any $\e,\delta>0$ we have
\begin{align}
 4u^q|\psi| |\langle AD  u^q,D \psi\rangle|\leq 2\e\psi^2\langle AD
 u^q,D  u^q\rangle+\frac{2}{\e}u^{2q}\langle AD \psi,D\psi\rangle
\end{align}
and
\begin{align}
 2(2q-1)\psi^2u^q|\langle a,D  u^q\rangle|&\leq2(2q-1)\psi^2u^q|a| |D _{m_0}u^q|\leq\frac{2q-1}{\delta}\psi^2u^{2q}|a|^2+\delta(2q-1)\psi^2|D _{m_0}u^q|^2.
\end{align}
These estimates give
\begin{align}
 \int_{R_r(z_0)}2\left(\frac{2q-1}{q}-\e\right)\psi^2\langle AD  u^q,D
 u^q\rangle \leq&\ \int_{R_r(z_0)} \delta(2q-1)\psi^2|D _{m_0}u^q|^2\\
 &+\int_{R_r(z_0)}u^{2q}\left(\frac{2}{\e}\langle
 AD \psi,D \psi\rangle+\frac{2q-1}{\delta}\psi^2|a|^2+h(\psi)\right).
\end{align}
Now, choose $\e=\frac{2q-1}{2q}$, which is positive since $q>\frac{1}{2}$, and use Assumption
\ref{assA} on the matrix $A$ to obtain
\begin{align}
 \frac{2q-1}{q\mu}\int_{R_r(z_0)}\psi^2|D _{m_0}u^q|^2\leq&\ \int_{R_r(z_0)}
 \delta(2q-1)\psi^2|D _{m_0}u^q|^2\\ &+\int_{R_r(z_0)}u^{2q}\left(\frac{4q}{2q-1}\langle
 AD \psi,D \psi\rangle+\frac{2q-1}{\delta}\psi^2|a|^2+h(\psi)\right),
\intertext{or equivalently}
 \left(\frac{2q-1}{q\mu}-\delta(2q-1)\right)\int_{R_r(z_0)}\psi^2|D _{m_0}u^q|^2\leq&\ \int_{R_r(z_0)}u^{2q}\left(\frac{4q}{2q-1}\langle AD \psi,D \psi\rangle+\frac{2q-1}{\delta}\psi^2|a|^2+h(\psi)\right).
\end{align}
The choice $\delta=\frac{1}{2q\mu}$ yields
\begin{equation}\label{last caccioppoli}
 \frac{2q-1}{2q\mu}\int_{R_r(z_0)}\psi^2|D _{m_0}u^q|^2\leq\int_{R_r(z_0)}u^{2q}\Big(\frac{4q}{2q-1}\langle
 AD \psi,D \psi\rangle+2q\mu(2q-1)\psi^2|a|^2+h(\psi)\Big).
\end{equation}
The thesis now follows by making a suitable choice of the function $\psi$. More precisely, if
$z_0=(t_0,x_0)$ we set
\begin{equation}\label{e13}
 \psi(t,x)=\chi\left(\sqrt{|t-t_0|}\right)\chi\left(\|z_0^{-1}\circ(x,0)\|\right),
\end{equation}
where $\chi\in C^{\infty}(\mathbb{R},[0,1])$ is such that
\begin{align}
 \chi(s)=
  \begin{cases}
    1 & \text{ if }\quad s\leq\rho, \\
    0 & \text{ if }\quad s\geq r,
  \end{cases}
 \qquad\text{ and }\quad|\chi '|\leq \frac{2}{r-\rho}.
\end{align}
Note that
\begin{equation}\label{e20}
  \left|\partial_{t}\psi\right|, \left|\partial_{x_{j}}\psi\right|\leq
  \frac{C_{1}}{r-\rho},\qquad j=1,\dots,d,
\end{equation}
where $C_{1}$ is a dimensional constant. With such a choice for $\psi$, we can bound $h(\psi)$ in
\eqref{def h} as
\begin{align}
 |h(\psi)|\leq C\frac{q}{r-\rho}
\end{align}
with $C=C(\cost,\|B\|)$ and inequality \eqref{last caccioppoli} becomes
\begin{align}
 \frac{2q-1}{2q\mu}\int_{R_{\rho}(z_0)}|D _{m_0}u^q|^2&\leq\int_{R_r(z_0)}u^{2q}\left(\frac{4q\mu}{2q-1}|D _{m_0}\psi|^2+
 2q\mu(2q-1)\|  a\| _{\infty}^2+|h(\psi)|\right)\\
 &\leq\frac{Cq^2}{(2q-1)(r-\rho)^2}\int_{R_r(z_0)}u^{2q},
\end{align}
which corresponds to \eqref{caccioppoli}.  The statement concerning super-solutions is proved
similarly. By a standard approximation argument, we can suppose that $u$ is positive; the test
function to be used is $\varphi=2|q|u^{2q-1}\psi^2$ and all the previous inequalities are reversed
due to the negativity of $2q-1$.
\end{proof}
We now state and prove the following Sobolev type inequality.
\begin{theorem}[Sobolev type inequality]\label{Sobolev}
Let $L\in\Kol$, $\l\in[0,1]$ and $L^{\l}$ as in \eqref{Lr}. If $u$ is a non-negative weak
sub-solution of $L^{\l}u=0$ in $R_r(z_0)$, then $u\in L^{2\k}_{\text{\rm loc}}(R_{r}(z_0))$ with
$\k=1+\frac{2}{Q}$ and we have
\begin{equation}\label{sobolev}
 \|u\| _{L^{2\k}(R_{\rho}(z_0))}\leq \frac{C}{r-\rho}\left(\|u\|_{L^{2}(R_{r}(z_0))}+\|  D_{m_0}u\| _{L^{2}(R_{r}(z_0))}\right),
\end{equation}
for every $0<\rho<r\le r_{0}$, satisfying $r-\r<1$, with $C$ dependent only on $\cost,B$ and
$r_{0}$. The same statement holds for non-negative super-solutions.
\end{theorem}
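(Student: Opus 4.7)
I follow the ad hoc approach of Pascucci--Polidoro \cite{PP2004} referenced in the introduction: localise $u$ via a cutoff, write the localised function as a potential of the constant-coefficient principal part of $L^{\l}$, and invoke a parabolic Sobolev-type inequality adapted to the homogeneous group of dimension $Q$. By the translation invariance of Remark \ref{rtrasl} we may assume $z_{0}=0$. Pick $\psi\in C_{0}^{\infty}(R_{r})$ with $\psi\equiv 1$ on $R_{\r}$, $0\le \psi\le 1$, and $|\p_{t}\psi|+|D\psi|\le C/(r-\r)$, compatibly with the intrinsic cylinders as in \eqref{e13}; set $w:=u\psi$, extended by zero outside $R_{r}$ so that $w$ is compactly supported in $\R^{d+1}$.

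\textbf{Step 1: source estimate.} Let $P^{\l}:=\tfrac{1}{2}\Delta_{m_{0}}+\langle B^{(\l)}x,D\rangle+\p_{t}$ be the constant-coefficient principal part of $L^{\l}$, whose fundamental solution $\Gamma_{0}^{\l}$ has the explicit Gaussian form \eqref{e22and} (with $B$ replaced by $B^{(\l)}$). Multiplying the weak sub-solution inequality for $L^{\l}u=0$ by a non-negative test function of the form $\psi\varphi$ and using the Leibniz rule for $D_{m_{0}}$ and for $\langle B^{(\l)}x,D\rangle+\p_{t}$, one writes
$$P^{\l} w \,\le\, \div_{m_{0}}(G_{1})+G_{2}$$
in the distributional sense, where $G_{1},G_{2}$ collect the commutators between $\psi$ and the first- and second-order parts of $L^{\l}$ together with the lower-order contributions $a^{(\l)}\psi u$ and $c^{(\l)}\psi u$. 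Since $\|A^{(\l)}\|_{\infty}$, $\|a^{(\l)}\|_{\infty}$, $\|c^{(\l)}\|_{\infty}$ and the entries of $B^{(\l)}$ are bounded uniformly for $\l\in[0,1]$, a direct inspection yields
$$\|G_{1}\|_{L^{2}}+\|G_{2}\|_{L^{2}} \,\le\, \frac{C}{r-\r}\bigl(\|u\|_{L^{2}(R_{r})}+\|D_{m_{0}}u\|_{L^{2}(R_{r})}\bigr),$$
with $C=C(\cost,B,r_{0})$.

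\textbf{Step 2: Sobolev on the group.} Since $w$ is compactly supported, the potential representation gives $w=\Gamma_{0}^{\l}\ast P^{\l} w$ in the group-convolution sense; after integrating the divergence by parts,
$$w \,=\, \Gamma_{0}^{\l}\ast G_{2}\,+\,D_{m_{0}}\Gamma_{0}^{\l}\ast G_{1}.$$
The kernel $\Gamma_{0}^{\l}$ is Gaussian with respect to the intrinsic anisotropic distance and $\d_{r}$-homogeneous of degree $-Q$ (modulo the $\l$-perturbation), while $D_{m_{0}}\Gamma_{0}^{\l}$ is of degree $-Q-1$. A parabolic Young-type argument based on the explicit Gaussian bounds on $\Gamma_{0}^{\l}$ then shows that both convolution operators are bounded from $L^{2}$ into $L^{2\k}$ with $\k=1+\tfrac{2}{Q}$, so
$$\|w\|_{L^{2\k}} \,\le\, C\bigl(\|G_{1}\|_{L^{2}}+\|G_{2}\|_{L^{2}}\bigr),$$
which combined with Step 1 gives \eqref{sobolev}. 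The super-solution case is obtained by replacing $u$ with $-u$.

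\textbf{Main obstacle.} The delicate point is uniformity of all constants in $\l\in[0,1]$. Although the $\ast$-blocks of $B^{(\l)}$ depend on $\l$ through the powers of $\l^{2k}$ in \eqref{e65bl}, they remain bounded on $[0,1]$ while the hypoelliptic core $B_{1},\dots,B_{\n}$ is preserved; consequently the covariance matrix of the Gaussian process generated by $P^{\l}$ admits uniform two-sided bounds compatible with $\Dil(\sqrt{t})^{2}$. This transfers to uniform Gaussian estimates for $\Gamma_{0}^{\l}$ and, in turn, to uniform constants in the convolution bounds of Step 2, which is exactly what the strategy announced in the introduction requires.
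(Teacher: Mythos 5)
Your sub-solution argument is essentially the paper's own proof: localise $u$ with the cutoff of \eqref{e13}, represent $u\psi$ through the fundamental solution of the constant-coefficient principal part of $L^{\l}$, split the source into cutoff commutators, the coefficient difference $(A_{0}-A^{(\l)})D_{m_0}u$, and the lower-order terms $a^{(\l)},c^{(\l)}$, then close with potential estimates mapping $L^{2}$ into $L^{2\k}$ (the zero-order piece actually lands in a better space and is brought back to $L^{2\k}$ by H\"older on the bounded cylinder), uniformly in $\l\in[0,1]$. One bookkeeping caveat: with the paper's conventions a weak sub-solution satisfies $L^{\l}u\geq 0$ in the distributional sense, and the backward representation carries a minus sign, $w(z)=-\int \Gg(z;\zeta)\,(L_{0}w)(\zeta)\,d\zeta$; so both of your displayed relations ($P^{\l}w\leq \div_{m_0}(G_{1})+G_{2}$ and $w=\Gamma_{0}^{\l}\ast P^{\l}w$) have reversed signs. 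The two slips cancel in the final estimate, but as written neither is correct, and the pointwise upper bound on $w$ really rests on the combination $u\geq 0$, $\Gg\geq 0$, $L^{\l}u\geq 0$ — exactly the paper's ``$I_{3}\leq 0$'' step.

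The genuine gap is your last sentence: the super-solution case is \emph{not} obtained by replacing $u$ with $-u$. If $u\geq 0$ is a weak super-solution, then $-u$ is a sub-solution but it is non-positive, so the non-negativity used above is lost; concretely, for a super-solution the distributional inequality reverses, and convolving with the non-negative kernel yields only a lower bound of the form $u\geq(\text{potentials of }u)$, which gives no control whatsoever on $\|u\|_{L^{2\k}(R_{\rho}(z_0))}$. The paper needs an extra device here: time reversal. Setting $\hat{z}=(-t,x)$ and $v(z)=u(\hat{z})$, one has $D v(z)=D u(\hat{z})$ and $\hat{Y}v(z)=-Yu(\hat{z})$ with $\hat{Y}=-\langle x,BD\rangle+\p_{t}$, so that $v$ satisfies the weak inequality appropriate for the time-reversed operator $\hat{L}_{0}=\div(A_{0}D)+\hat{Y}$, and one then represents $v$ through the fundamental solution $\hat{\Gamma}_{0}$ of $\hat{L}_{0}$ and repeats the potential estimates. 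Without this (or an equivalent duality/reflection argument) the super-solution half of the statement — which is precisely what feeds the Moser iteration for negative and small positive exponents in Theorem \ref{moser} — remains unproved.
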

\begin{proof}
We only sketch the proof since it follows closely the idea of the proof of Theorem 3.3 in
\cite{PP2004}. First we consider the case $\l=1$: if $u$ is a non-negative sub-solution of
\eqref{PDE} in $R_r(z_0)$, we represent it in terms of the fundamental solution $\Gg$ in
\eqref{e22and}. To this end, we set by $A_0=\frac{1}{2}\s
\s^{*}$ with $\s$ as in \eqref{sigm} 
and consider the cut-off function $\psi$ introduced in \eqref{e13}. Then, for every $z\in
R_{\rho}(z_0)$ we write
\begin{align}
 u(z)&=(u\psi)(z)\\ &= \int\limits_{R_{r}(z_0)} \left(\langle A_0 D  (u\psi),D
 \Gg(z;\cdot)\rangle-\Gg(z;\cdot)Y(u\psi)\right)(\zeta)d\zeta\\
 &=I_{1}(z)+I_{2}(z)+I_{3}(z)+I_4(z),
\end{align}
where
\begin{align*}
  I_{1}(z) = & \int\limits_{R_{r}(z_0)} \left( \langle A_0 D  \psi,D
  \Gg(z;\cdot) \rangle u \right) (\zeta) d\zeta - \int\limits_{R_{r}(z_0)}
  \left( \Gg(z;\cdot)u Y\psi  \right)(\zeta)d\zeta,\\
  I_{2}(z)= & \int\limits_{R_{r}(z_0)} \left( \langle \left(A_0-A\right) D  u,D
  \Gg(z;\cdot) \rangle \psi \right)(\zeta)d\zeta
  - \int\limits_{R_{r}(z_0)} \left(\Gg(z;\cdot)\langle A D  u,D  \psi
  \rangle\right)(\zeta)d\zeta, \\
  I_{3}(z) = & \int\limits_{R_{r}(z_0)} \left(\langle A D  u,D  (
  \Gg(z;\cdot)\psi) \rangle-\Gg(z;\cdot)\psi Y v+\langle a,D (\Gg(z;\cdot)\psi)\rangle u-\Gg(z;\cdot)\psi c u \right)(\zeta)d\zeta.\\
  I_{4}(z) = & \int\limits_{R_{r}(z_0)} \left(-\langle a,D (\Gg(z;\cdot)\psi)\rangle u+\Gg(z;\cdot)\psi c u \right)(\zeta)d\zeta.
\end{align*}
Since $u$ is a weak sub-solution of \eqref{PDE}, it follows that $I_{3}\leq 0$ and therefore
\begin{align}
 0\leq u\leq I_{1} +I_{2}+ I_4 \qquad \text{a.e. in } R_{\rho}(z_0).
\end{align}
The integrals $I_1$ and $I_2$ can be estimated as in the proof of Theorem 3.3 in \cite{PP2004}.
For the last term we have
\begin{align}
 I_4(z)&= \int\limits_{R_{r}(z_0)} \left(-\langle a,D (\Gg(z;\cdot)\psi)\rangle
 u+\Gg(z;\cdot)\psi c u \right)(\zeta)d\zeta\\ &=\int\limits_{R_{r}(z_0)}
 \left(-\psi\langle a,D \Gg(z;\cdot)\rangle u-\Gg(z;\cdot)\langle
 a,D \psi\rangle u+\Gg(z;\cdot)\psi c u \right)(\zeta)d\zeta\\
 &=\int\limits_{R_{r}(z_0)} \left(-\psi\langle a,D \Gg(z;\cdot)\rangle
 u-\Gg(z;\cdot)u(\langle a,D \psi\rangle -\psi c)  \right)(\zeta)d\zeta\\
 &=\int\limits_{R_{r}(z_0)} \left(-\psi\langle a,D \Gg(z;\cdot)\rangle
 u\right)(\zeta)d\zeta-\int\limits_{R_{r}(z_0)} \left(\Gg(z;\cdot)u(\langle
 a,D \psi\rangle -\psi c) \right)(\zeta)d\zeta\\ &=:I'_4(z)+I''_4(z).
\end{align}
By the potential estimates in \cite{CPP2008}, Theorem 2, and the H\"older inequality, we get
\begin{align}
 \|I_4\|_{L^{2\k}(R_{\rho}(z_0))}&\leq\| I'_4\|_{L^{2\k}(R_{\rho}(z_0))}+\|
 I''_4\|_{L^{2\k}(R_{\rho}(z_0))}\\ &\leq\| I'_4\|_{L^{2\k}(R_{\rho}(z_0))}+C_2\|
 I''_4\|_{L^{2\tilde{\kappa}}(R_{\rho}(z_0))}\\ &\leq C_1 \|
 u\|_{L^{2}(R_{r}(z_0))}+\frac{C_2}{r-\rho}\| u\|_{L^{2}(R_{r}(z_0))},
\end{align}
where $\tilde{\kappa}=1+\frac{4}{Q-2}$, $C_1=C_1(r_{0},B,\|  a \| _{\infty})$ and
$C_2=C_2(r_{0},B,\| a \| _{\infty},\|  c \| _{\infty})$. For the general case when $\l\in[0,1]$,
the proof is completely analogous and be carried out using the fact that the potential estimates
in \cite{CPP2008} are uniform in $\l$.

A similar argument proves the thesis when $u$ is a super-solution. In this case, we introduce the
following auxiliary operator
\begin{align}
 \hat{L}_{0}=\div(A_0 D )+\hat{Y}\quad\text{ where }\quad \hat{Y}:= -\langle x,B
 D \rangle+\partial_{t}.
\end{align}
For any $z=(t,x)$, we set $\hat{z}=(-t,x)$, $v(z)=u(\hat{z})$ and remark that
\begin{equation*}
 D  v(z)= D  u(\hat{z})\quad\text{ and }\quad \hat{Y} v(z)=-Y u(\hat{z})
\end{equation*}
almost everywhere.
Then, if $R$ is a domain which is symmetric with respect to the time variable $t$, since $u$ is a
super-solution, we have
\begin{align}
 &\int\limits_{R}\left(- \langle A (\hat{z}) D  v,D  \phi \rangle -\langle
 a(\hat{z}),D \phi\rangle v+c(\hat{z})\phi v- \phi \hat{Y} v\right)(z)d z\\
 &= \int\limits_{R} \left(-\langle
 A(\hat{z}) D  u(\hat{z}),D  \phi(z)\rangle-\langle a(\hat{z}),D \phi(z)\rangle u(\hat{z})+c(\hat{z})\phi(z) u(\hat{z})
 +\phi(z) Y u(\hat{z})\right) d z \leq 0,
\end{align}
for every non-negative $\phi\in C_{0}^{\infty}(R)$. Then, we represent $v$ in terms of the
fundamental solution $\hat{\Gamma}_{0}$ of $\hat{L}_{0}$ and the proof proceeds as before.
\end{proof}
We are now ready to prove Theorem \ref{moser}.
\begin{proof}[Proof of Theorem \ref{moser}]
As in the proof of Theorem 1 in \cite{CPP2008}, the argument is based on the Moser's iteration method. 
The inequality to be iterated is obtained through a combination of Theorem \ref{Caccioppoli} and
Theorem \ref{Sobolev}, and reads
\begin{equation}\label{e55}
 \|u^{q}\|_{L^{2\kappa}(R_{\rho}(z_0))}\leq \frac{C(\cost,r_{0},q)\sqrt{|q|}}{(r-\r)^{2}}\|u^{q}\|_{L^{2}(R_{r}(z_0))},
\end{equation}
where $0<\rho<r\le r_{0}$ with $r-\rho<1$, $q\neq\frac{1}{2}$ and $u$ is a non-negative weak
solution of $L^{\l}u=0$. From Theorem \ref{Caccioppoli} we see that $C(\cost,r_{0},q)$, as a
function of $q$, is bounded at infinity and diverges at $q=\frac{1}{2}$: this feature is in common
with the equation studied in \cite{CPP2008}. However, the presence of the new factor $\sqrt{|q|}$
in the right hand side of \eqref{e55} requires additional care in the application of the Moser's
iterative procedure. First of all, we fix a sequence of radii
$\rho_n=\Big(1-\frac{1}{2^n}\Big)\rho+\frac{1}{2^n}r$, a sequence of exponents
$q_n=\frac{p}{2}\kappa^n$ and a safety distance, say $\delta$, from $\frac{1}{2}$. The exponent
$p$ is chosen to guarantee that the distance of the resulting exponent $q_n$ from $\frac{1}{2}$ is
at least $\delta$, for each $n\geq 1$. We then iterate inequality \eqref{e55} to obtain
\begin{align}
\|u^{\frac{p}{2}}\|_{L^{\infty}(R_{\rho}(z_0))}\leq f(r-\r)\|u^{\frac{p}{2}}\|_{L^{2}(R_{r}(z_0))}
\end{align}
where, for some $\tilde{C}=\tilde{C}(\cost,r_{0},\d)$, 
\begin{align}
 f(r-\r)&=\prod_{j=0}^{\infty}
 \left(\frac{\tilde{C}\sqrt{|p|}\kappa^{\frac{j}{2}}}{(\r_{j}-\r_{j+1})^{2}}\right)^{\frac{1}{\kappa^{j}}}=\frac{C_1(\cost,r_{0},p)}{(r-\r)^{\frac{Q+2}{2}}},
\end{align}
This proves inequality (\ref{e50}) for $p$ satisfying $|\frac{p}{2}k^n-\frac{1}{2}|\geq\delta$.
The previous restriction is easily relaxed using the monotonicity of the $L^p$-means (see
\cite{CPP2008} for details).
\end{proof}
\begin{remark} The previous proof can be slightly modified to see that Moser's estimate
\eqref{e50} still holds true on the cylinder $R^+_r(z_0):= z_0 \circ \d_{r}
\left(R^{+}_{1}\right)$ with
 $ R^{+}_{1} = \{(t,x)\in\R\times\R^{d}\mid  0<t<1,\, |x|<1\}.$
\end{remark}


\section{Gaussian upper bound}
In this section we prove a Gaussian upper bound for the fundamental solution $\G$ of $L\in\Kol$.
We begin with an important implication of the Moser's estimate \eqref{e50}.
\begin{theorem}[Nash upper bound]\label{t4}
Let $\G$ be the fundamental solution of $L\in\Kol$. Then, there exists a positive constant
$C=C(\cost,T_{0})$ such that
\begin{equation}\label{e30}
 \G(t,x;T,y)\le \frac{C}{(T-t)^{\frac{Q}{2}}},\qquad 0<T-t\le T_{0},\ x,y\in\R^{d}.
\end{equation}
\end{theorem}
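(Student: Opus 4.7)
The plan is a standard $L^{1}\!\to\!L^{\infty}$ argument: combine the Moser bound of Theorem \ref{moser} at $p=1$ with a uniform mass estimate on $\G(\cdot,\cdot;T,y)$, applied on a cylinder whose time-scale is comparable to $T-t$. Fix $(T,y)$ and set $u(s,\xi):=\G(s,\xi;T,y)$; for $s<T$ this is a non-negative weak solution of $Lu=0$ on $(-\infty,T)\times\R^{d}$. My first task is to prove the uniform mass bound
\begin{equation}
 \int_{\R^{d}} u(s,\xi)\,d\xi \le M_{1}, \qquad s\in(T-T_{0},T),
\end{equation}
with $M_{1}=M_{1}(\cost,T_{0})$. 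My second task is to apply the Moser bound at the right scale.

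For the second task, fix $z_{0}=(t,x)$ with $0<T-t\le T_{0}$ and choose $r:=c_{0}\sqrt{T-t}$ with $c_{0}>0$ small and depending only on $T_{0}$, so that $r\le r_{0}:=c_{0}\sqrt{T_{0}}$, $r/2<1$, and the cylinder $R_{r}(z_{0})$, whose time-range is $(t-r^{2},t+r^{2})$ with $r^{2}=c_{0}^{2}(T-t)$, is contained strictly in $(-\infty,T)\times\R^{d}$. Theorem \ref{moser} with $p=1$ and $\rho=r/2$ then yields
\begin{equation}
 u(t,x)\le \sup_{R_{r/2}(z_{0})} u \le \frac{C}{(r/2)^{Q+2}}\int_{R_{r}(z_{0})} u(s,\xi)\,ds\,d\xi,
\end{equation}
with $C=C(\cost,T_{0})$. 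Bounding the right-hand integral by Fubini and the mass bound gives
\begin{equation}
 \int_{R_{r}(z_{0})} u(s,\xi)\,ds\,d\xi \le \int_{t-r^{2}}^{t+r^{2}}\int_{\R^{d}} u(s,\xi)\,d\xi\,ds \le 2M_{1}r^{2},
\end{equation}
and substituting $r=c_{0}\sqrt{T-t}$ yields $u(t,x)\le C'(\cost,T_{0})(T-t)^{-Q/2}$, which is \eqref{e30}.

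The main obstacle I expect is the mass bound. Unlike in the Fokker-Planck case, the first-order term $a$ and the zero-order term $c$ prevent $\int u(s,\xi)\,d\xi$ from being conserved; one should obtain a bound growing at worst exponentially in $T-s$ by a duality argument, testing the equation for $u$ against smooth cutoffs of the constant function $1$, using $\|a\|_{\infty},\|c\|_{\infty}\le\cost$ together with the divergence structure, and closing via Gronwall. Since $T-s\le T_{0}$, the resulting constant depends only on $\cost$ and $T_{0}$, as required. Once this ingredient is in hand the rest is bookkeeping: only a single application of Moser at scale $r\asymp\sqrt{T-t}$ is needed, no iteration is required, and neither the translation invariance of Remark \ref{rtrasl} nor the dilation result of Lemma \ref{l3} are used at this stage.
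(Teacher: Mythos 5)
Your proposal is correct and follows essentially the same route as the paper: the paper likewise applies the Moser bound of Theorem \ref{moser} on a cylinder of radius comparable to $\sqrt{T-t}$ (with $\rho=\frac12\sqrt{(T-t)/\max\{T_0,1\}}$, $r=\sqrt2\rho$) and then bounds the space--time integral by the spatial mass, asserting $\int_{\R^d}\G(s,\xi;T,y)\,d\xi\le e^{(T-s)\|c\|_\infty}$ exactly where you invoke your mass estimate. The only difference is presentational: you flag the mass bound as a step needing a duality/Gronwall argument (which would in fact also pick up a harmless $\mathrm{tr}\,B$ contribution), whereas the paper states it without proof.
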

\begin{proof}
By Theorem \ref{moser}, with $\r=\frac{1}{2}\sqrt{\frac{T-t}{\max\{T_{0},1\}}}$ and
$r=\sqrt{2}\r$, we have
\begin{align}
 \G(t,x;T,y)&\le \sup_{R_{\r}(t,x)}\G(\cdot,\cdot;T,y)\\
 &\le \frac{C}{(T-t)^{\frac{Q+2}{2}}}\iint\limits_{R_{r}(t,x)}\G(s,\x;T,y)d \x d s\\
 &\le \frac{C}{(T-t)^{\frac{Q+2}{2}}}\int\limits_{t-\frac{T-t}{2\max\{T_{0},1\}}}^{t+\frac{T-t}{2\max\{T_{0},1\}}}\int\limits_{\R^{d}}\G(s,\x;T,y)d \x d s
\intertext{{(since $\int\limits_{\R^{d}}\G(s,\x;T,y)d \x d s\le e^{(T-s)\|c\|_{\infty}}$)}}
 &\le \frac{C}{(T-t)^{\frac{Q}{2}}}.
\end{align}
\end{proof}

An immediate consequence of Theorem \ref{t4} is the following
\begin{corollary}\label{c1bis} \
There exists a positive constant $C=C(\cost,T_{0})$ such that
\begin{align}
 \int\limits_{\R^d}\G^{2}(t,x;T,y) dy \le \frac{C}{(T-t)^{\frac{Q}{2}}},\qquad 0<T-t\le T_{0},\ x\in\R^{d},
\end{align}
and
\begin{align}
 \int\limits_{\R^d}\G^{2}(t,x;T,y) d x \le \frac{C}{(T-t)^{\frac{Q}{2}}},\qquad 0<T-t\le T_{0},\ y\in\R^{d}.
\end{align}
\end{corollary}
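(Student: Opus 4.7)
The plan is to derive both inequalities by a straightforward factorization $\G^{2}=\G\cdot \G$, pulling out a supremum bound supplied by Theorem \ref{t4} and absorbing the remaining integral into a uniform $L^{1}$-control on $\G$ in the appropriate variable.

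First I would write, for any fixed $(t,x)$ with $0<T-t\le T_{0}$,
\begin{align}
\int_{\R^{d}}\G^{2}(t,x;T,y)\,dy\;\le\;\Bigl(\sup_{y\in\R^{d}}\G(t,x;T,y)\Bigr)\int_{\R^{d}}\G(t,x;T,y)\,dy.
\end{align}
The supremum is bounded by $C(T-t)^{-Q/2}$ thanks to the Nash estimate of Theorem \ref{t4}. For the remaining factor, the function $u(t,x):=\int_{\R^{d}}\G(t,x;T,y)\,dy$ is the solution of the backward Cauchy problem for $L$ with terminal datum $\phi\equiv 1$, so a standard maximum principle (or Gr\"onwall) argument applied to $L u=0$ gives $\int_{\R^{d}}\G(t,x;T,y)\,dy\le e^{(T-t)\|c\|_{\infty}}\le e^{T_{0}\cost}$. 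Combining the two bounds yields the first stated inequality with $C=C(\cost,T_{0})$.

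For the second inequality I would argue symmetrically: fix $(T,y)$ and write
\begin{align}
\int_{\R^{d}}\G^{2}(t,x;T,y)\,dx\;\le\;\Bigl(\sup_{x\in\R^{d}}\G(t,x;T,y)\Bigr)\int_{\R^{d}}\G(t,x;T,y)\,dx.
\end{align}
Again Theorem \ref{t4} controls the supremum by $C(T-t)^{-Q/2}$, uniformly in $x$ and $y$. The remaining factor $\int_{\R^{d}}\G(t,x;T,y)\,dx$ is exactly the quantity already invoked in the proof of Theorem \ref{t4}, where the bound $\int_{\R^{d}}\G(s,\x;T,y)\,d\x\le e^{(T-s)\|c\|_{\infty}}$ was used. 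This bound is a consequence of viewing $\G(\cdot,\cdot;T,y)$ as a density evolving under the formal adjoint $L^{\ast}$ in the $x$-variable (the drift $\langle Bx,D\cdot\rangle$ produces a $-\tr B$ contribution after integration by parts, which is absorbed into a constant depending on $B$ and $T_{0}$). Multiplying the two bounds gives the second inequality with the same dependence of constants.

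The only step requiring a substantive input beyond the Nash bound is the uniform $L^{1}$-control $\int\G(\cdot,\cdot;T,y)\,dx\le C$; this is precisely the adjoint-type mass estimate already relied upon in the proof of Theorem \ref{t4}, so no new obstacle arises here. Both inequalities then follow with $C=C(\cost,T_{0})$, as claimed.
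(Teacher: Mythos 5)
Your proof is correct and follows exactly the route the paper has in mind: the corollary is labelled an ``immediate consequence'' of Theorem \ref{t4} precisely because one factors $\G^{2}=\G\cdot\G$, bounds one factor in $L^{\infty}$ by the Nash estimate, and bounds the remaining integral by the uniform $L^{1}$-in-$x$ (resp.\ $L^{1}$-in-$y$) control on $\G$ — the very bound $\int_{\R^{d}}\G(s,\xi;T,y)\,d\xi\le e^{(T-s)\|c\|_{\infty}}$ already invoked inside the proof of Theorem \ref{t4}. You correctly identify that the two inequalities call on two distinct mass bounds (the backward Cauchy problem with $\phi\equiv 1$ for the $dy$-integral, and the adjoint/forward mass for the $dx$-integral), and you note the $\mathrm{tr}\,B$ contribution that arises in the latter and is harmlessly absorbed into $C(\cost,T_{0})$ since $B$ is fixed in the class $\Kol$. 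No gap.
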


Our proof of a Gaussian upper bound for the fundamental solution is adapted to Aronson's method
\cite{Aronson}. The next theorem is a crucial step in this direction.
\begin{theorem}\label{t2}
Fix $y\in \R^{d}$, $\s>0$ and let $u_{0}\in L^{2}(\R^{d})$ be such that $u_{0}(x)=0$ for
$|x-y|<\s$. Let $L\in\Kol$ and suppose that $u$ is a bounded solution to \eqref{PDE} in
$[\y-\s^2,\y[\,\times \R^{d}$ with terminal value $u(\eta,x)=u_{0}(x)$. Then, there exist positive
constants $k$ and $C$ such that for any $\t$ which satisfies
$\eta-\frac{1\wedge\sigma^2}{k}\leq\t\leq\eta$ we have
\begin{equation}\label{e16}
|u((0,e^{-\y B}y)\circ(\t,0))|\le C(\eta-\t)^{-\frac{Q}{4}}
\exp\left(-\frac{\s^{2}}{C(\eta-\t)}\right)\|u_{0}\|_{L^{2}(\R^{d})}.
\end{equation}
The constants $k$ and $C$ depend only on $\cost$. 
\end{theorem}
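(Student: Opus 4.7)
The plan is to follow the Aronson--Davies weighted-$L^{2}$ scheme, tailored to the sub-elliptic Kolmogorov setting, and then to convert the resulting $L^{2}$ bound into the desired pointwise bound by the Moser estimate of Theorem \ref{moser}.

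First I reduce to $y = 0$ by exploiting the left-translation invariance of $\Kol$ from Remark \ref{rtrasl}. Setting $v(t,x) := u\bigl((0,e^{-\eta B}y)\circ(t,x)\bigr) = u\bigl(t, x + e^{(t-\eta)B}y\bigr)$, the function $v$ solves $L^{(\zeta)}v = 0$ for some $L^{(\zeta)}\in\Kol$, has terminal value $v(\eta, x) = u_{0}(x+y)$ supported on $\{|x|\geq\sigma\}$, and $v(\tau, 0)$ is precisely the quantity to bound. Next, given a smooth weight $\phi = e^{\psi}$, I multiply $L^{(\zeta)}v = 0$ by $v\phi^{2}$ and integrate over $\R^{d}$. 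Using the ellipticity upper bound $\langle A\xi,\xi\rangle \leq \mu|\xi_{m_{0}}|^{2}$ from Assumption \ref{assA}, Cauchy--Schwarz to absorb the cross term $4\int v\phi\langle ADv, D\phi\rangle$ into $2\int\phi^{2}\langle ADv, Dv\rangle$, the identity $\int \phi^{2}vYv\,dx = -\tfrac12\int v^{2}Y(\phi^{2})\,dx - \tfrac12\operatorname{tr}(B)\int v^{2}\phi^{2}\,dx$, and boundedness of $a_{i}, c$, I arrive at
\[
\frac{d}{dt}\int_{\R^d} v^{2}\phi^{2}\,dx \;\geq\; 2\int v^{2}\phi^{2}\bigl(\psi_{t} + \langle Bx, D\psi\rangle - \mu|D_{m_{0}}\psi|^{2}\bigr)\,dx \;-\; C_{1}\int v^{2}\phi^{2}\,dx,
\]
with $C_{1} = C_{1}(\cost, B)$.

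The key is a choice of $\psi$ making the bracket non-negative. A natural candidate is $\psi(t,x) = -\alpha\bigl\langle \mathcal{C}(\eta-t)^{-1}x, x\bigr\rangle$, i.e. $\alpha$ times the Gaussian exponent of the principal-part kernel \eqref{e22and}: this turns the bracket into zero at the level of $L_{0}$, leaving only bounded perturbations to absorb into $C_{1}$. Gr\"onwall then yields
\[
\int v^{2}(\tau,x)\phi^{2}(\tau,x)\,dx \;\leq\; e^{C_{1}(\eta-\tau)}\int u_{0}(x+y)^{2}\phi^{2}(\eta,x)\,dx \;\leq\; C\,e^{-\alpha\sigma^{2}/C(\eta-\tau)}\|u_{0}\|_{L^{2}}^{2},
\]
the last inequality following because $\phi^{2}(\eta,\cdot)$ decays like $\exp(-c\alpha|x|^{2}/(\eta-\tau))$ on $\{|x|\geq\sigma\}$ by the Gaussian behaviour of $\mathcal{C}(\eta-\tau)^{-1}$. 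Integrating over $s\in[\tau,\tau+r^{2}]$ for $r\sim \sqrt{\eta-\tau}$ and using that $\phi\geq e^{-C}$ on the intrinsic forward half-cylinder $R_{r}^{+}(\tau,0)$ (which sits in the admissible strip because $\eta-\tau \leq 1/k$), this gives $\int_{R_{r}^{+}(\tau,0)} v^{2} \leq C r^{2}\,e^{-\alpha\sigma^{2}/C(\eta-\tau)}\|u_{0}\|_{L^{2}}^{2}$. Theorem \ref{moser} on $R_{r}^{+}(\tau,0)$ then produces
\[
|v(\tau,0)|^{2} \;\leq\; \frac{C}{r^{Q+2}}\int_{R_{r}^{+}(\tau,0)} v^{2} \;\leq\; \frac{C}{(\eta-\tau)^{Q/2}}\,e^{-\alpha\sigma^{2}/C(\eta-\tau)}\|u_{0}\|_{L^{2}}^{2},
\]
which after taking square roots and renaming $C$ is exactly \eqref{e16}.

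The hardest step is the weight construction: establishing the Hamilton--Jacobi inequality $\psi_{t} + \langle Bx,D\psi\rangle \geq \mu|D_{m_{0}}\psi|^{2}$ in the degenerate setting, where $|D_{m_{0}}\psi|^{2}$ only sees the first $m_{0}$ directions while $\langle Bx, D\psi\rangle$ reaches the full space through the subdiagonal blocks $B_{1},\dots,B_{\nu}$ of Assumption \ref{assB}. The covariance $\mathcal{C}(\eta-t)$ is precisely the device that balances these contributions exactly for the principal part $L_{0}$; this is why its inverse defines the correct anisotropic weight. The restriction $\eta - \tau \leq (1\wedge\sigma^{2})/k$ enters here: after the intrinsic rescaling of Lemma \ref{l3} with $\lambda = \sqrt{\eta-\tau}\leq 1$, the new $\ast$-blocks in $B^{(\lambda)}$ displayed in \eqref{e65bl} remain uniformly bounded, so that the full operator remains a perturbation of its principal part within $\Kol$ with constants depending only on $\cost$ and $B$.
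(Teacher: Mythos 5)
Your overall architecture (weighted $L^{2}$ energy estimate in the spirit of Aronson, followed by the Moser bound of Theorem \ref{moser} on an intrinsic forward cylinder, with the translation reduction to $y=0$) is the same as the paper's. But the central step — the choice of weight — contains a genuine error. With $\psi(t,x)=-\alpha\langle \mathcal{C}(\eta-t)^{-1}x,x\rangle$ the bracket you need to be non-negative is in fact strictly negative and unboundedly so as $t\to\eta^{-}$. Using the Lyapunov identity $\dot{\mathcal{C}}(s)=B\mathcal{C}(s)+\mathcal{C}(s)B^{*}+\s\s^{*}$ one computes
\begin{align}
 \psi_{t}+\langle Bx,D\psi\rangle-\mu|D_{m_{0}}\psi|^{2}
 =-4\alpha\langle Bx,\mathcal{C}(\eta-t)^{-1}x\rangle-\alpha(1+4\mu\alpha)\left|\s^{*}\mathcal{C}(\eta-t)^{-1}x\right|^{2},
\end{align}
which already for $B=0$, $m_{0}=d$ equals $-\alpha(1+4\mu\alpha)|x|^{2}/(\eta-t)^{2}<0$, no matter how small $\alpha$ is; it cannot be absorbed into the Gr\"onwall constant $C_{1}$. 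The claim that the kernel exponent ``turns the bracket into zero at the level of $L_{0}$'' has the sign backwards: the exact (even conjugated) Gaussian exponent is precisely the borderline weight for which this backward energy inequality fails, a fact already visible for the classical heat equation. A symptom of the same problem is that your weight is singular at $t=\eta$, so $\phi^{2}(\eta,\cdot)\equiv 0$ off the origin and the right-hand side of your Gr\"onwall conclusion vanishes, which would give the absurd bound $\int v^{2}\phi^{2}(\tau)\le 0$.

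What actually makes the argument work (and what the paper does) is Aronson's widened Gaussian: $h(t,x)=-|x|^{2}/\bigl(2(\eta-s)-k(\eta-t)\bigr)+\alpha(\eta-t)$, with $k$ large and the time interval restricted so that the denominator stays comparable to $\eta-s$. The good term is not an exact Hamilton--Jacobi balancing via $\mathcal{C}$ but the large time derivative of the denominator, which produces $k|x|^{2}/\d^{2}$ and dominates $3\mu|D_{m_{0}}h|^{2}\le 12\mu|x|^{2}/\d^{2}$, the drift term $|\langle Bx,Dh\rangle|\le 2\|B\||x|^{2}/\d$, and the lower-order terms; the weight is finite at $t=\eta$, giving the factor $e^{-\s^{2}/(\eta-s)}$ on $\{|x|\ge\s\}$, and bounded below on the anisotropic region $|\Dil(\d^{-1})x|\le 1$, which is exactly the spatial section needed for the Moser step. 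Note also that no anisotropic covariance weight is needed here — only $D_{m_{0}}h$ enters quadratically, so an isotropic $|x|^{2}$ weight suffices — and that the rescaling Lemma \ref{l3} you invoke at the end plays no role in this theorem (it is used later, in Step 2 of the proof of Theorem \ref{t1}). To repair your proof you should replace your weight by the Aronson-type one (or a time-shifted, nonsingular variant) and verify the differential inequality as above; the remaining steps (cut-off $\g_{R}$ to justify the integrations by parts, lower bound of the weight on the intrinsic cylinder, Moser, translation) then go through as you outlined.
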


\begin{proof}
We first prove the thesis for $y=0$. We fix $s$ such that $0\leq\eta-s\leq1\wedge\sigma^2$ and we
define
\begin{align}
 h(t,x)=-\frac{|x|^2}{2(\eta-s)-k(\eta-t)}+\alpha(\eta-t),\qquad \eta-\frac{\eta-s}{k}\leq t\leq \eta,\ x\in\mathbb{R}^d,
\end{align}
with $\alpha$ and $k$ being positive constants to be fixed later on. Moreover, for $R\ge 2$, we
consider  a function $\g_{R}\in C_{0}^{\infty}(\R^{d},[0,1])$ such that $\g_{R}(x)\equiv 1$ for
$|x|\le R-1$, $\g_{R}(x)\equiv 0$ for $|x|\ge R$ with $|D  \g_{R}|$ bounded by a constant
independent of $R$. Then, we multiply both sides of \eqref{PDE} by $\g^2_{R}e^{2h}u$ and we
integrate over $[\tau,\eta]\times \R^{d}$, with $\eta-\frac{\eta-s}{k}\leq\tau\le\eta$, to get
\begin{equation}\label{e19}
\begin{split}
  &\int\limits_{\R^{d}}\g_{R}^{2}e^{2h}u^{2}\vert_{t=\t}\,d x
  -2\iint\limits_{[\tau,\eta]\times\R^{d}} e^{2h}u^{2}
  \left(3\langle A D _{m_{0}}h,D _{m_{0}}h\rangle-Y
  h-2\langle a,D _{m_{0}} h\rangle+\Lambda\right)d x d t\le \\
  &\int\limits_{\R^{d}}\g_{R}^{2}e^{2h}u^{2}\vert_{t=\eta}\,d x+
  2\iint\limits_{[\tau,\eta]\times\R^{d}}e^{2h}u^{2}\left(3\mu
  \left|D _{m_{0}}\g_{R}\right|^{2}+\left|Y\g_{R}^{2}\right|-{2\langle a,D _{m_{0}}\g_{R}\rangle\g_{R}}
  \right)d x d t,
\end{split}
\end{equation}
where $\Lambda$ is a positive constant depending on $\cost$. 
The proof of \eqref{e19} is tedious but routine: all the details are reported in
Appendix \ref{aronapp}.

Next we let $R$ go to infinity in \eqref{e19}: since $u$ is bounded by assumption and
$e^{2h(t,x)}\le e^{-\frac{|x|^{2}}{\eta-s}+2\alpha(\eta-s)}$, the last integral tends to zero and
we get
\begin{equation}\label{e195}
 \int\limits_{\R^{d}} e^{2h}u^{2}\vert_{t=\t}\,d x
  -2\iint\limits_{[\tau,\eta]\times\R^{d}} e^{2h}u^{2}
  \left(3\langle A D _{m_{0}}h,D _{m_{0}}h\rangle-Y
  h-2\langle a,D _{m_{0}} h\rangle+\Lambda\right)d x d t\\
  \le \int\limits_{\R^{d}} e^{2h}u^{2} \vert_{t=\eta}\,d x.
\end{equation}
\noindent We now claim that, by a suitable choice of $k$ and $\alpha$, only dependent on
$\cost,B$,
we have
\begin{equation}\label{e17}
  3\langle A D _{m_{0}}h,D _{m_{0}}h\rangle-Y
  h-2\langle a,D _{m_{0}} h\rangle+\Lambda\le 0,\qquad \eta-\frac{\eta-s}{k}\le t\le\eta,\ x\in\R^{d}.
\end{equation}
Indeed, letting $\d=2(\eta-s)-k(\eta-t)$ to ease the notation, we have
\begin{align}
 3\langle A D _{m_{0}}h,D _{m_{0}}h\rangle-Yh-2\langle a,D _{m_{0}}
 h\rangle+\Lambda
 &\leq \frac{12\mu|x|^2}{\d^2}+\frac{2\left\|B\right\||x|^2}{\d}-\frac{k|x|^2}{\d^2}-\alpha+\frac{4\langle a,x\rangle}{\d}+\Lambda\\
 &{\le \frac{|x|^2}{\d^2}(12\mu+2\d\left\|B\right\|-k+2)-\alpha+2\left\|a\right\|^{2}_{\infty}+\Lambda}\\
 &{\leq \frac{|x|^2}{\d^2}(12\mu+4\left\| B\right\|-k+2)-\alpha+2\left\|a\right\|^{2}_{\infty}+\Lambda,}
\end{align}
and, with $\alpha=2\left\|a\right\|^{2}_{\infty}+\Lambda$ and $k$ big enough, the last term can be
made negative.

From \eqref{e17} and \eqref{e195}, we derive the inequalities
\begin{align}
  \max_{t\in\, ]\eta-\frac{\eta-s}{k},\eta[}\ \int\limits_{\left|D\left(\frac{2\sqrt{k}}{\sqrt{\eta-s}}\right)
  x\right|\le 1}e^{2h(t,x)}u^{2}(t,x)d x
  &\le \max_{t\in\, ]\eta-\frac{\eta-s}{k},\eta[}\ \int\limits_{\R^{d}} e^{2h(t,x)}u^{2}(t,x)d x\\
  \label{est3}
  &\le \int\limits_{|x|\ge\s}e^{2h(\eta,x)}u_{0}^{2}(x)d x.
\end{align}
Now we notice that, by definition, for every $t\in\,]\eta-\frac{\eta-s}{k},\eta]$ we have
\begin{align}
 2h(t,x)&\ge -\frac{2|x|^{2}}{\y-s}=
\intertext{(setting $\d=\frac{\sqrt{\eta-s}}{2\sqrt{k}}$)}
 &= -\frac{2\left|\Dil(\d)\Dil(\d^{-1})x\right|^{2}}{\y-s}\ge
\intertext{(if
$\left|D\left(\frac{2\sqrt{k}}{\sqrt{\eta-s}}\right)x\right|=\left|\Dil\left(\d^{-1}\right)x\right|\le
1$)}
 &\ge -\frac{2\left\|\Dil(\d)\right\|^{2}}{\y-s}\ge
\intertext{(since $\d<1$ by assumption)}\label{est1}
 &\ge -\frac{2\d^{2}}{\y-s}= -\frac{1}{2k}.
\end{align}
On the other hand, if $|x|\ge\s$, we have
\begin{align}\label{est2}
 -2h(\eta,x)=\frac{2|x|^2}{2(\eta-s)}\geq\frac{\sigma^2}{\eta-s}.
\end{align}
Plugging estimates \eqref{est1} and \eqref{est2} into \eqref{est3}, we get
\begin{equation}\label{e23}
  \max_{t\in ]\eta-\frac{\eta-s}{k},\eta[}\ \int\limits_{\left|D\left(\frac{2\sqrt{k}}{\sqrt{\eta-s}}
  \right)x\right|\le 1}u^{2}(t,x)d x \le e^{\frac{1}{2k}}
  \exp\left(-\frac{\s^{2}}{\eta-s}\right)\|u_{0}\|^{2}_{L^{2}(\R^{d})}.
\end{equation}
Finally, we rely on Theorem \ref{moser} in order to get the desired estimate \eqref{e16}. We let
$\tau=\eta-\frac{\eta-s}{k}$ and we observe that $\tau\in [\eta-\frac{1}{k},\eta]$ and
$\eta-s=k(\eta-\t)$: thus we have
\begin{align}
 |u(\t,0)|^{2}&\le \sup_{R^{+}_{\frac{\sqrt{\eta-s}}{4\sqrt{k}}}(\t,0)}|u|^{2}\le
\intertext{(by \eqref{e50})}
 &\le \frac{C}{(\eta-s)^{\frac{Q+2}{2}}}\iint\limits_{R^{+}_{\frac{\sqrt{\eta-s}}{2\sqrt{k}}}(\t,0)}u^{2}(t,x)d x d t\\
 &=  \frac{C}{(\eta-s)^{\frac{Q+2}{2}}} \int\limits^{\t+\frac{\eta-s}{4k}}_{\t}
 \int\limits_{\left|D\left(\frac{2\sqrt{k}}{\sqrt{\eta-s}}\right)x\right|\le 1} u^{2}(t,x)d x d t
\intertext{(by \eqref{e23})}
 &\le  \frac{C}{(\eta-s)^{\frac{Q}{2}}}\exp\left(-\frac{\s^{2}}{C(\eta-s)}\right) \|u_{0}\|^{2}_{L^{2}(\R^{d})}\\
 &{= \frac{C}{k^{\frac{Q}{2}}(\eta-\t)^{\frac{Q}{2}}}\exp\left(-\frac{\s^{2}}{Ck(\eta-\t)}\right)
 \|u_{0}\|^{2}_{L^{2}(\R^{d})},}
\end{align}
where the constant $C=C(\cost,k)$. This yields \eqref{e16} in the case $y=0$.

For the general case, for any fixed $u$, $u_{0}$ and $(\y,y)$ as in the statement, we set
\begin{align}
 v(\t,x)= u\left((0,e^{-\y B}y)\circ(\t,x)\right)\qquad \t<\eta, \ x\in\R^{d},
\end{align}
{and observe that $v(\eta,x)=u(\eta,y+x)=u_{0}(y+x)=0$ for $|x|\le\s$. Moreover, by the invariance
property of the vector field $Y$ with respect to the left translation $\ell_z$, we have
 $L^{(z)} v=0,$ 
where {$L^{(z)}:=L\circ \ell_{z}\in\Kol$} and $z=(0,e^{-\y B}y)$.} Thus, we get as before
\begin{align}
 |u(z\circ(\t,0))|&= |v(\t,0)|\leq\frac{C}{(\eta-\t)^{\frac{Q}{4}}}\exp\left(-\frac{\s^{2}}{C(\eta-\t)}\right)
\|u_{0}\|_{L^{2}(\R^{d})}
\end{align}
completing the proof.
\end{proof}

\noindent The following corollary is a simple consequence of Theorems \ref{t2}.
\begin{corollary}\label{c1}
There exists two positive constants $k$ and $C$, that depend only on $\cost$, such that for every
$\sigma>0$ and $\y\in\R$, we have
\begin{equation}\label{e24}
 \int\limits_{\left|\x-e^{(\y-t)B}x\right|\ge\s}\G^{2}(t,x;\eta,\xi)d\x\le
 \frac{Ce^{-\frac{\s^{2}}{C(\eta-t)}}}{(\eta-t)^{\frac{Q}{2}}},\qquad (t,x)\in\Big[\eta-\frac{1\wedge\sigma^2}{k},\eta\Big[\times\R^{d},
\end{equation}
and
\begin{equation}\label{e24dual}
 \int\limits_{\left|x-e^{(t-\y)B}\x\right|\ge\s}\G^{2}(t,x;\eta,\xi)dx\le
 \frac{Ce^{-\frac{\s^{2}}{C(\eta-t)}}}{(\eta-t)^{\frac{Q}{2}}},\qquad (t,x)\in\Big[\eta-\frac{1\wedge\sigma^2}{k},\eta\Big[\times\R^{d}.
\end{equation}
\end{corollary}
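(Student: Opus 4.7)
The idea is to apply Theorem \ref{t2} to a solution $u$ of $Lu=0$ whose value at the base point $(t,x)$ coincides with the $L^{2}$-integral we want to estimate.

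\emph{Proof of \eqref{e24}.} Fix $(t,x)$ with $\y-\frac{1\wedge\s^{2}}{k}\le t<\y$ and set $y:=e^{(\y-t)B}x$, so that the left-translation $(0,e^{-\y B}y)\circ(t,0)=(t,e^{(t-\y)B}y)$ equals $(t,x)$. Consider the terminal datum $u_{0}(\x):=\mathbf{1}_{\{|\x-y|\ge \s\}}(\x)\,\G(t,x;\y,\x)$ and the associated Cauchy solution $u(s,z):=\int_{\R^{d}}\G(s,z;\y,\x)u_{0}(\x)\,d\x$. By construction $u_{0}\equiv 0$ on $\{|\x-y|<\s\}$; by Theorem \ref{t4}, $\|u_{0}\|_{\infty}\le C(\y-t)^{-Q/2}$; and by Corollary \ref{c1bis}, $\|u_{0}\|_{L^{2}}^{2}\le C(\y-t)^{-Q/2}$. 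Moreover $u$ is a bounded solution of $Lu=0$ on $[\y-\s^{2},\y[\,\times\R^{d}$: the mass bound $\int\G(s,z;\y,\x)d\x\le e^{(\y-s)\|c\|_{\infty}}$ gives $\|u(s,\cdot)\|_{\infty}\le \|u_{0}\|_{\infty}e^{\s^{2}\|c\|_{\infty}}$ uniformly on that strip.

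Taking $\t=t$ in Theorem \ref{t2} (admissible by the range of $t$), we obtain
\begin{align}
 u(t,x)\le \frac{C}{(\y-t)^{Q/4}}\exp\left(-\frac{\s^{2}}{C(\y-t)}\right)\|u_{0}\|_{L^{2}}.
\end{align}
On the other hand, directly,
\begin{align}
 u(t,x)=\int_{\R^{d}}\G(t,x;\y,\x)u_{0}(\x)d\x=\int_{\{|\x-e^{(\y-t)B}x|\ge\s\}}\G^{2}(t,x;\y,\x)d\x,
\end{align}
which is the left-hand side of \eqref{e24}. Plugging in $\|u_{0}\|_{L^{2}}\le C(\y-t)^{-Q/4}$ and adjusting constants yields \eqref{e24}.

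For the dual inequality \eqref{e24dual}, the same scheme is applied to the formal adjoint of $L$: after reversing the time variable (as in the super-solution argument at the end of the proof of Theorem \ref{Sobolev}), the adjoint is again a Kolmogorov operator in the class $\Kol$ but with $-B$ in place of $B$, and $-B$ still satisfies Assumption \ref{assB}. The corresponding version of Theorem \ref{t2} thus applies, and replicating the argument above with $(\y,\x)$ now playing the role of the fixed parameters and $(t,x)$ the free ones gives \eqref{e24dual}.

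\textbf{Main obstacle.} Essentially all the analytic work is contained in Theorem \ref{t2}; the only real step here is the choice of $u_{0}$, which forces the evaluation of $u$ at the base point to reproduce $\int\G^{2}$ over the desired region. The minor technical point is the uniform boundedness of $u$ on the time strip, handled by the mass bound on $\G$. For the dual estimate, the extra input is the verification that the formal adjoint of $L$ belongs to $\Kol$ (with $-B$ in place of $B$), which is immediate from the block structure of $B$.
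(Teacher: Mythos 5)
Your proof of \eqref{e24} is, up to notation, exactly the paper's: you build the same auxiliary function $u(s,w)=\int_{|\xi-y|\ge\sigma}\Gamma(s,w;\eta,\xi)\,\Gamma(t,x;\eta,\xi)\,d\xi$ with $y=e^{(\eta-t)B}x$ (so that $(0,e^{-\eta B}y)\circ(t,0)=(t,x)$), apply Theorem \ref{t2} at that point, and absorb $\|u_0\|_{L^{2}}$ via Corollary \ref{c1bis}; the boundedness remark for $u$ is a detail the paper does not even spell out. For \eqref{e24dual} the paper only says ``proved similarly'', and your dual/adjoint route is the natural reading of that remark; the one claim you should soften is that the time-reversed formal adjoint of $L$ ``belongs to $\Kol$ with $-B$ in place of $B$''. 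The adjoint is $\div(AD v)-\langle a,D v\rangle+(c-\mathrm{tr}\,B)\,v-\langle Bx,D v\rangle-\partial_t v$: its first-order part is in non-divergence form and cannot be rewritten as $\div(\tilde a\,v)$ with a bounded measurable $\tilde a$ unless $a$ is differentiable, so after time reversal the operator is not literally a member of the class of Notation \ref{notation1} (its zero-order coefficient also becomes $c-\mathrm{tr}\,B$). This is harmless for the estimates — a bounded non-divergence first-order term is absorbed in the Caccioppoli/Moser and Aronson-type computations exactly as $\div(au)$ is, and $\mathrm{tr}\,B$ merely enlarges the bound on the zero-order term, so constants still depend only on $\cost$ and $B$ — but the accurate statement is that Theorem \ref{t2} and the estimates behind it extend to such operators (with drift matrix $-B$, which indeed still satisfies Assumption \ref{assB}), rather than that the adjoint belongs to $\Kol$ verbatim.
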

\begin{proof}
First of all we observe that
\begin{align}
 \int\limits_{\left|\x-e^{(\y-t)B}x\right|\ge\s}\G^{2}(t,x;\eta,\xi)d\x&=
 \int\limits_{\left|\x-y\right|\ge \s} \G^{2}\left(t,e^{(t-\eta)B}y;\eta,\xi\right)d\x\\
 &=\int\limits_{\left|\x-y\right|\ge \s} \G^{2}\left((0,e^{-\y B}y)\circ(t,0);\eta,\xi\right)d\x.
\end{align}
Now the function
\begin{align}
 u(s,w): =\int\limits_{\left|\x-y\right|\ge \s} \G(s,w;\eta,\xi)\G((0,e^{-\y
 B}y)\circ(t,0);\eta,\xi)d\x,
\end{align}
is a non-negative solution to \eqref{PDE} for $s<\y$, with terminal condition
  $$u(\eta,w)=
  \begin{cases}
    0 & \text{ if } |w-y|<\s, \\
    \G((0,e^{-\y B}y)\circ(t,0);\eta,w) & \text{ if } |w-y|\ge \s.
  \end{cases}
  $$
Setting $(s,w)=(0,e^{-\y B}y)\circ(t,0)$, from Theorem \ref{t2} we infer
\begin{align}
 \int\limits_{\left|\x-y\right|\ge \s}\G^{2}\left((0,e^{-\y B}y)\circ(t,0);\eta,\xi\right)d\x&= u((0,e^{-\y B}y)\circ(t,0))\\ &\le
 \frac{Ce^{-\frac{\s^{2}}{C(\eta-t)}}}{(\eta-t)^{\frac{Q}{4}}}\|\G\left((0,e^{-\y
 B}y)\circ(t,0),\eta,\cdot\right)\|_{L^{2}(\R^{d})}.
\end{align}
Then the thesis follows directly from Corollary \ref{c1bis}. Inequality \eqref{e24dual} is proved
similarly.
\end{proof}

We are now in position to prove our main result.
\begin{proof}[Proof of Theorem \ref{t1}]
The proof proceeds in a series of steps.

\noindent {\bf Step 1.} We first prove the thesis for $y=0$ and $T-t=\frac{1}{k}$, with $k$ as in
Theorem \ref{t2}. We fix $x\in\R^{d}$ and set
\begin{equation}\label{ee1}
 \s(x)=\frac{|x|}{2 \|e^{\frac{T-t}{2}B}\|}.
\end{equation}
If $\s(x)\le 1$, that is $|x|\le 2\|e^{\frac{T-t}{2}B}\|$, then the thesis is a direct consequence
of Theorem \ref{t4} and the fact that, by assumption, $T-t=\frac{1}{k}$ is fixed with $k$
dependent only upon $\cost$.

On the other hand, if $\s(x)\ge 1$, by the Chapman-Kolmogorov identity and putting
$\eta=T-\frac{T-t}{2}$, we have
\begin{align}
 \G(t,x;T,0)= \int\limits_{\R^{d}}\G(t,x;\eta,\xi)\G(\eta,\xi;T,0)d\x=J_1+J_2,
\end{align}
where
\begin{align}
 J_{1}&:=\ \int\limits_{\big|\x-e^{\frac{T-t}{2}B}x\big|\ge\s(x)}\G(t,x;\eta,\xi)\G(\eta,\xi;T,0)d\x,\\
 J_{2}&:=\ \int\limits_{\big|\x-e^{\frac{T-t}{2}B}x\big|<\s(x)}\G(t,x;\eta,\xi)\G(\eta,\xi;T,0)d\x.
\end{align}
By the Cauchy-Schwarz inequality, we have
\begin{align}
 \left(J_{1}\right)^{2}&\le\int\limits_{\big|\x-e^{\frac{T-t}{2}B}x\big|\ge\s(x)}\G^2(t,x;\eta,\xi)d\x
 \int\limits_{\big|\x-e^{\frac{T-t}{2}B}x\big|\ge\s(x)} \G^2(\eta,\xi;T,0)d\x
\intertext{(by \eqref{e24} and Corollary \ref{c1bis})}
 &\leq \frac{Ce^{-\frac{\s^{2}(x)}{C(T-t)}}}{(T-t)^{Q}}\\
 &= C k^{Q}\exp\left(-\frac{k|x|^2}{4C\|e^{\frac{1}{2k}B}\|^{2}}\right).
\end{align}
In order to estimate $J_{2}$, we first note that if $\big|\x-e^{\frac{T-t}{2}B}x\big|<\s(x)$ then,
recalling also the definition \eqref{ee1} of $\s(x)$, we have
\begin{align}\label{ee2}
 |\x|\ge\big|e^{-\frac{T-t}{2}}x\big|-\big|\x-e^{-\frac{T-t}{2}}x\big|\ge \frac{|x|}{\|e^{\frac{T-t}{2}B}\|}-\s(x)=\s(x).
\end{align}
Thus, by \eqref{ee2} and using again the Cauchy-Schwarz inequality, we have
\begin{align}
 \left(J_{2}\right)^{2}&\le 
 \int\limits_{\left|\x\right|\geq\s(x)} \G^2(\eta,\xi;T,0)d\x \int\limits_{\left|\x\right|\geq\s(x)}\G^2(t,x;\eta,\xi)d\x
\intertext{(by \eqref{e24dual})}
 &\leq \frac{Ce^{-\frac{\s^{2}(x)}{C(T-t)}}}{(T-t)^{\frac{Q}{2}}} \int\limits_{\mathbb{R}^d}\G^2(t,x;\eta,\xi)d\x
\intertext{(by Corollary \ref{c1bis})}
 &\leq \frac{C}{(T-t)^{Q}}e^{-\frac{\s^{2}(x)}{C(T-t)}}\\
 &= C k^{Q}\exp\left(-\frac{k|x|^2}{4C\|e^{\frac{1}{2k}B}\|^{2}}\right).
\end{align}
This completes the proof of the case $\s(x)\geq 1$. In conclusion we have proved estimate
\eqref{thes} for $T-t=\frac{1}{k}$, that is
\begin{equation}\label{last}
 \Gamma(t,x;T,0)\leq Ce^{-\frac{|x|^2}{C}},\qquad T-t=\frac{1}{k}, \ x\in\R^{d},
\end{equation}
with the constant $C$ only dependent on $\cost$ and $B$. 
Actually, the same estimate holds also for the fundamental solution $\G^{\l}$ of $L^{\l}$ in
\eqref{Lr}, {\it with $C$ independent of $\l\in[0,1]$:} in fact, all the results of this section
derive from the Moser's estimate, Theorem \ref{moser}, which is uniform in $\l\in[0,1]$.

\medskip\noindent {\bf Step 2.} We use a scaling argument to generalize estimate \eqref{last} to the case $0<
T-t\leq\frac{1}{k}$; precisely, we prove that
\begin{equation}\label{last1}
 \Gamma(t,x;T,0)\leq\frac{C}{(T-t)^{\frac{Q}{2}}}e^{-\frac{|x|^2}{C(T-t)}},\qquad 0< T-t\leq\frac{1}{k},\
 x\in\R^{d}.
\end{equation}
For $\l\in[0,1]$, we set
 $$\Gamma^{\l}(t,x;T,0)=\l^{Q}\Gamma(\d_{\l}(t,x);\d_{\l}(T,0))$$
and observe that, since the Jacobian $J \Dil(\l)$ equals $\l^{Q}$, we have that $\Gamma^{\l}$ is a
fundamental solution of the operator $L^{(\l)}$ in \eqref{Lr}.

Now, fix $t$ such that $0<T-t\leq\frac{1}{k}$ and set $\l=k(T-t)$. Then we have
\begin{align}
 \Gamma(t,x;T,0)&=\l^{-\frac{Q}{2}}\Gamma^{(\sqrt{\l})}\left(\frac{t}{\l},\Dil\left(\frac{1}{\sqrt{\l}}\right)x;\frac{T}{\l},0\right)\le
\intertext{(by \eqref{last})}
 &\le C\l^{-\frac{Q}{2}}e^{-\frac{1}{C}\left|\Dil\left(\frac{1}{\sqrt{\l}}\right)x\right|^{2}}
\end{align}
which proves \eqref{last1}.

\medskip\noindent {\bf Step 3.} We now remove the condition $y=0$. Let 
$z=(0,e^{-TB}y)$ and $\Gamma^{(z)}$ be the fundamental solution of the operator $L^{(z)}:=L\circ
\ell_z$. 
{Since $L^{(z)}\in\Kol$, we have that $\Gamma^{(z)}$ satisfies the estimate \eqref{last1} and
hence we} obtain
\begin{align}
\Gamma(t,x;T,y)&= 
 \Gamma^{(z)}(z^{-1}\circ (t,x);T,0)\\ &= \Gamma^{(z)}(t,x-e^{-(T-t)B}y;T,0)\\
 &\leq \frac{C}{(T-t)^{\frac{Q}{2}}}\exp\left(-\frac{1}{C}\left|\Dil\left(\frac{1}{\sqrt{T-t}}\right)\left(x-e^{-(T-t)B}y\right)\right|^{2}\right),\qquad 0< T-t\leq\frac{1}{k},\ x,y\in\R^{d}.
\end{align}

\medskip\noindent {\bf Step 4.} In the last step we relax the restriction on the length of the time interval. We first
suppose that $0< T-t\leq\frac{2}{k}$ and set $\tau=\frac{T-t}{2}$. By the Chapman-Kolmogorov
identity we have
\begin{align}
 \Gamma(t,x;T,y)&= \int_{\mathbb{R}^d}\Gamma(t,x;t+\tau,\xi)\Gamma(t+\tau,\xi;T,y)d\xi\\
 &\leq \frac{C}{\tau^{Q}}\int_{\mathbb{R}^d}e^{-\frac{1}{C}\left|\Dil\left(\frac{1}{\sqrt{\tau}}\right)\left(x-e^{-\t B}\xi\right)\right|^2}
 e^{-\frac{1}{C}\left|\Dil\left(\frac{1}{\sqrt{\tau}}\right)\left(\x-e^{-\t B}y\right)\right|^2}d\xi\\
 &\leq \frac{C}{\tau^{Q}}\int_{\mathbb{R}^d}e^{-\frac{1}{C}\left|\Dil\left(\frac{1}{\sqrt{\tau}}\right)\left(x-e^{-\t B}\xi\right)\right|^2}
 e^{-\frac{1}{C}\left|\Dil\left(\frac{1}{\sqrt{\tau}}\right)\left(e^{-\t B}\xi-e^{-(T-t)B}y\right)\right|^2}d\xi
\intertext{(by the Chapman-Kolmogorov identity for a standard Gaussian kernel)}
 &\leq \frac{C}{(T-t)^{\frac{Q}{2}}}e^{-\frac{1}{C}\left|\Dil\left(\frac{1}{\sqrt{T-t}}\right)\left(x-e^{-(T-t)B}y\right)\right|^2}.
\end{align}
Iterating this procedure we can extend the estimate to any bounded time interval and this
concludes the proof.
\end{proof}

\titleformat{\section}{\large\bfseries}{\appendixname~\thesection .}{0.5em}{}
\begin{appendices}

\section{}\label{aronapp}
\noindent The proof of estimate \eqref{e19} is based on standard integration by parts and the
repeated use of the chain rule and the elementary inequality $2|\a\b|\leq \delta
\a^2+\frac{\b^2}{\delta}$. We start multiplying both sides of \eqref{PDE} by $\g^2_{R}e^{2h}u$ and
integrating over $[\tau,\eta]\times \R^{d}$: we get
\begin{align}
 0 
 &=\iint\limits_{[\tau,\eta]\times\R^{d}}\div(AD
 u)\gamma^2_Re^{2h}u+\iint\limits_{[\tau,\eta]\times\R^{d}}(Yu)\gamma^2_Re^{2h}u+\iint\limits_{[\tau,\eta]\times\R^{d}}\div(au)\gamma^2_Re^{2h}u
 +\iint\limits_{[\tau,\eta]\times\R^{d}}cu^2\gamma^2_Re^{2h}\\ \label{appeq}
 &=:\mathbf{I}_1+\mathbf{I}_2+\mathbf{I}_3+\mathbf{I}_4.
\end{align}
Now, we have
\begin{align}
 \mathbf{I}_1=&\, -\iint\limits_{[\tau,\eta]\times\R^{d}}\langle AD  u,D (\gamma^2_Re^{2h}u)\rangle\\
 =&\, -\iint\limits_{[\tau,\eta]\times\R^{d}}2\langle AD  u,D \gamma_R\rangle \gamma_Re^{2h}u+2\langle AD  u,D
 h\rangle\gamma^2_Re^{2h}u+\langle AD  u,D  u\rangle\gamma^2_Re^{2h}\\
 \leq&\ \iint\limits_{[\tau,\eta]\times\R^{d}}\frac{1}{3}\langle AD  u,D
 u\rangle\gamma_R^2e^{2h}+ 3\langle AD \gamma_R,D \gamma_R\rangle u^2e^{2h}\\
  &+\iint\limits_{[\tau,\eta]\times\R^{d}}\frac{1}{3}\langle AD  u,D
 u\rangle\gamma^2_Re^{2h}+3\langle AD  h,D  h\rangle\gamma^2_Re^{2h}u^2-\iint\limits_{[\tau,\eta]\times\R^{d}}\langle AD  u,D  u\rangle\gamma^2_Re^{2h}\\
 =&\, \iint\limits_{[\tau,\eta]\times\R^{d}}-\frac{1}{3}\langle AD  u,D
 u\rangle\gamma_R^2e^{2h}+ 3\langle AD \gamma_R,D \gamma_R\rangle u^2e^{2h}+3\langle
 AD  h,D  h\rangle\gamma^2_Re^{2h}u^2\\
 \leq&\ \iint\limits_{[\tau,\eta]\times\R^{d}}-\frac{1}{3\mu}|D _{m_0} u|^2\gamma_R^2e^{2h}+ 3\langle
 AD \gamma_R,D \gamma_R\rangle u^2e^{2h}+3\langle AD  h,D
 h\rangle\gamma^2_Re^{2h}u^2;
\intertext{moreover, we have}
 \mathbf{I}_2=&\, \frac{1}{2}\iint\limits_{[\tau,\eta]\times\R^{d}}\gamma^2_Re^{2h}Yu^2\\
 =&\, \frac{1}{2}\iint\limits_{[\tau,\eta]\times\R^{d}}\gamma^2_Re^{2h}(\langle Bx,D \rangle+\partial_t)u^2\\
 =&\, -\frac{1}{2}\iint\limits_{[\tau,\eta]\times\R^{d}}u^2\langle Bx,D (\gamma^2_R e^{2h})\rangle+u^2\gamma^2_R e^{2h}\text{\rm tr}B\\
  &+\frac{1}{2}\int_{\R^d}u^2\gamma^2_Re^{2h}|_{\tau}^{\eta}-\iint\limits_{[\tau,\eta]\times\R^{d}}u^2\gamma^2_R
 e^{2h}\partial_th\\
 =&\, -\iint\limits_{[\tau,\eta]\times\R^{d}}u^2\langle Bx,D \gamma_R\rangle\gamma_R e^{2h}+u^2\langle Bx,D  h\rangle\gamma^2_R
 e^{2h}+u^2\gamma^2_R e^{2h}\text{\rm tr}B\\
  &+\frac{1}{2}\int_{\R^d}u^2\gamma^2_Re^{2h}|_{\tau}^{\eta}-\iint\limits_{[\tau,\eta]\times\R^{d}}u^2\gamma^2_R e^{2h}\partial_th\\
 =&\, -\iint\limits_{[\tau,\eta]\times\R^{d}}u^2\langle Bx,D \gamma_R\rangle\gamma_R e^{2h}+u^2\gamma^2_R e^{2h}Yh+u^2\gamma^2_R e^{2h}\text{\rm tr}B
 +\frac{1}{2}\int_{\R^d}u^2\gamma^2_Re^{2h}|_{\tau}^{\eta};
\intertext{finally, we also have}
 \mathbf{I}_3=&\, \iint\limits_{[\tau,\eta]\times\R^{d}}\div(au)\gamma^2_Re^{2h}u\\
 =&\, -\iint\limits_{[\tau,\eta]\times\R^{d}}u\langle a,D (\gamma^2_Re^{2h}u)\rangle\\
 =&\, -\iint\limits_{[\tau,\eta]\times\R^{d}}2u^2\gamma_Re^{2h}\langle
 a,D \gamma_R\rangle+2u^2\gamma^2_Re^{2h}\langle a,D  h\rangle+u\gamma^2_Re^{2h}\langle
 a,D  u\rangle\\
 \leq&\, -\iint\limits_{[\tau,\eta]\times\R^{d}}2u^2\gamma_Re^{2h}\langle
 a,D \gamma_R\rangle+2u^2\gamma^2_Re^{2h}\langle a,D  h\rangle+\iint\limits_{[\tau,\eta]\times\R^{d}}\frac{\delta}{2}|D _{m_0} u|^{2}
 \gamma_R^2e^{2h}+\frac{1}{2\delta}|a|^2u^2\gamma_R^2e^{2h}.
\end{align}
Plugging the above estimates into \eqref{appeq}, we get
\begin{align}
 0=&\, \mathbf{I}_1+\mathbf{I}_2+\mathbf{I}_3+\mathbf{I}_4\\
 \leq&\, \iint\limits_{[\tau,\eta]\times\R^{d}}-\frac{1}{3\mu}|D _{m_0} u|^2\gamma_R^2e^{2h}+ 3\langle
 AD \gamma_R,D \gamma_R\rangle u^2e^{2h}+3\langle AD  h,D
 h\rangle\gamma^2_Re^{2h}u^2\\
 &-\iint\limits_{[\tau,\eta]\times\R^{d}}u^2\langle Bx,D \gamma_R\rangle\gamma_R e^{2h}+u^2\gamma^2_R
 e^{2h}(Yh+\text{\rm tr}B)+\frac{1}{2}\int_{\R^d}u^2\gamma^2_Re^{2h}|_{\tau}^{\eta}\\
 &-\iint\limits_{[\tau,\eta]\times\R^{d}}2u^2\gamma_Re^{2h}\langle a,D \gamma_R\rangle+2u^2\gamma^2_Re^{2h}\langle a,D
 h\rangle+\iint\limits_{[\tau,\eta]\times\R^{d}}\frac{\delta}{2}|D _{m_0} u|^{2} \gamma_R^2e^{2h}+\frac{1}{2\delta}|a|^2u^2\gamma_R^2e^{2h}\\
 &+\iint\limits_{[\tau,\eta]\times\R^{d}}cu^2\gamma^2_Re^{2h},
\end{align}
and by choosing $\delta=\frac{2}{3\mu}$, we obtain
\begin{align}
 \frac{1}{2}\int_{\R^d}u^2\gamma^2_Re^{2h}|_{\tau}\leq&\, \frac{1}{2}\int_{\R^d}u^2\gamma^2_Re^{2h}|_{\eta}+
 \iint\limits_{[\tau,\eta]\times\R^{d}}3\langle AD \gamma_R,D \gamma_R\rangle u^2e^{2h}+3\langle
 AD  h,D  h\rangle\gamma^2_Re^{2h}u^2\\
  &-\iint\limits_{[\tau,\eta]\times\R^{d}}u^2\langle Bx,D \gamma_R\rangle\gamma_R
 e^{2h}+u^2\gamma^2_R e^{2h}(Yh+\text{\rm tr}B)\\
  &-\iint\limits_{[\tau,\eta]\times\R^{d}}2u^2\gamma_Re^{2h}\langle
 a,D \gamma_R\rangle+2u^2\gamma^2_Re^{2h}\langle a,D  h\rangle+\iint\limits_{[\tau,\eta]\times\R^{d}}\frac{3\mu}{4}|a|^2u^2\gamma_R^2e^{2h}\\
  &+\iint\limits_{[\tau,\eta]\times\R^{d}}cu^2\gamma^2_Re^{2h}\\
 =&\, \frac{1}{2}\int_{\R^d}u^2\gamma^2_Re^{2h}|_{\eta}\\
 &+\iint\limits_{[\tau,\eta]\times\R^{d}}(3\langle AD \gamma_R,D \gamma_R\rangle-\langle
 Bx,D \gamma_R\rangle\gamma_R-2\gamma_R\langle a,D \gamma_R\rangle)u^2e^{2h}\\
 &+\iint\limits_{[\tau,\eta]\times\R^{d}}(3\langle AD  h,D  h\rangle-Yh-2\langle a,D
 h\rangle+\frac{3\mu}{4}|a|^2+c-\text{\rm tr}B)u^2\gamma^2_Re^{2h}
\end{align}
Hence, setting $\Lambda=\frac{3\mu}{4}\|a\|_{\infty}^2+\|c\|_{\infty}+|\text{\rm tr}B|$, we obtain
\begin{align}
 \frac{1}{2}\int_{\R^d}u^2\gamma^2_Re^{2h}|_{\tau}\leq&\, \frac{1}{2}\int_{\R^d}u^2\gamma^2_Re^{2h}|_{\eta}\\
  &+\iint\limits_{[\tau,\eta]\times\R^{d}}(3\langle AD \gamma_R,D \gamma_R\rangle-\langle
 Bx,D \gamma_R\rangle\gamma_R-2\gamma_R\langle a,D \gamma_R\rangle)u^2e^{2h}\\
  &+\iint\limits_{[\tau,\eta]\times\R^{d}}(3\langle AD  h,D  h\rangle-Yh-2\langle a,D
 h\rangle+\Lambda)u^2\gamma^2_Re^{2h},
\end{align}
which yields estimate \eqref{e19}.
\end{appendices}

%
%

\bibliographystyle{siam}
\bibliography{BibTeX-Final}

\end{document}